\documentclass{amsart}
\usepackage{latexsym,amscd,amssymb,amsopn,amsthm,amsfonts,amsmath}
\usepackage{color}
\usepackage[dvipsnames]{xcolor} 
\usepackage{mathrsfs}
\copyrightinfo{2010}{American Mathematical Society}
\newtheorem{theorem}{Theorem}[section]
\newtheorem{lemma}[theorem]{Lemma}
\newtheorem{corollary}[theorem]{Corollary}

\theoremstyle{definition}

\newtheorem{example}[theorem]{Example}
\newtheorem{problem}[theorem]{Problem}

\theoremstyle{remark}
\newtheorem{remark}[theorem]{Remark}
\numberwithin{equation}{section}

\begin{document}

\title[On the uniqueness theorems for transmission problems]
{On the uniqueness theorems for transmissions problems related to  
models of elasticity, diffusion and electrocardiography}
\author[A. Shlapunov]{Alexander Shlapunov}

\email{ashlapunov@sfu-kras.ru}

\author[Yu. Shefer]{Yulia Shefer}
\email{yushefer@mail.ru}
\address{Siberian Federal University,
         Institute of Mathematics and Computer Science,
         pr. Svobodnyi 79,
         660041 Krasnoyarsk,
         Russia}

\subjclass [2010] {Primary 35J56; Secondary 35J57, 35K40}

\keywords{transmission problems, uniqueness theorems, the   
inverse problem of electrocardiography}

\begin{abstract}
We consider a generalization of the inverse problem of the electrocardiography in the 
framework of the theory of elliptic and parabolic 
differential operators. More precisely, starting with 
the standard bidomain mathematical model  related to the problem of  
the reconstruction of the transmembrane potential in the myocardium from known 
body surface potentials we formulate a more general transmission problem for elliptic and 
parabolic equations in the Sobolev type spaces and describe conditions, providing 
uniqueness theorems  for its solutions. Next, the new transmission 
problem is interpreted in the framework of the elasticity theory applied to composite 
media.  Finally, we prove a uniqueness theorem for an evolutionary 
transmission problem that  can be easily adopted 
to many models involving the diffusion type equations. 
\end{abstract}

\maketitle

\section*{Introduction}
\label{s.0}

Transmission problems for differential equations appear  in many applications, 
see, for instance, \cite{Bor10} in relation to the elliptic theory or \cite{eid} for 
parabolic operators. One of the 
topical example is the inverse problem of electrocardiography. It is 
the problem of (numerical) 
reconstruction of cardiac electrical activity from ECG measurements on the body 
surface having  a significant value for diagnostics and treatment of cardiac arrhythmias, 
see \cite{1}, \cite{geselowitz1983},  \cite{2} and elsewhere. The problem 
involves several boundary value problems for elliptic and parabolic 
differential operators. First, it is  Cauchy problem for elliptic operators 
that can be treated in the framework of the theory of the ill-posed problems, see
 \cite{Lv1}, \cite{KMF91}, \cite{Tark36}, \cite{TikhArsX}. Second, these are 
the Dirichlet problem and the Neumann problem for strongly elliptic operators 
possessing the Fredholm property (see, for instance, \cite{GiTru83}, 
\cite{McL00}, \cite{Mikh}, \cite{Roit96}   and \cite{Simanca1987} for their 
treatment in various function spaces). 
The model contains also an evolutionary part, see \cite{AP96}, 
\cite{2}, involving rather general non-linear parabolic equations,  that 
in some particular cases can be treated by the classical methods, 
see, for instance, \cite{LadSoUr67}, \cite{Lion69}. 

Recently,  theoretical investigations of the steady part of the model led to interesting 
results about non-uniqueness and existence of its solutions in 
Hardy type spaces, see \cite{2K}. Paper 
\cite{Shef} was devoted to a larger class of similar transmission 
problems in the Sobolev spaces in the framework of general theory of 
elliptic operators with constant coefficients. 
However the both results were obtained under the following 
very restrictive assumptions: all the elliptic operators involved in the model 
should be proportional. 

In the present work, we aim to describe conditions, providing uniqueness theorems  
for essentially general transmission problems of this kind involving 
both elliptic and parabolic differential operators. As an example,  the new steady 
transmission problem was interpreted in the framework of the elasticity theory applied to 
composite media. An uniqueness theorem was also proved for an evolutionary 
transmission problem that can be interpreted in the framework of the theory 
of diffusion processes. 

\section{The  bidomain model of the electrocardiography}
\label{s.bidomain.steady}

Let $\theta$ be a measurable set in ${\mathbb R}^n$, $n\geq 2$ (of course, 
for models of cardiology we may always restrict ourselves to $n=3$). 

Denote by $L^2(\theta)$ a Lebesgue space of functions on  $\theta$ with the 
inner product
\begin{equation*}
\left( u,v \right)_{L^2(\theta)} = \int_{\theta} v (x)u (x)\ dx .
\end{equation*}
If $D$ is a domain (an open connected set) in ${\mathbb R}^n$ with a piecewise smooth boundary
$\partial D$, then for $s \in \mathbb{N}$ we denote by $H^s(D)$ 
the standard Sobolev space with the inner product
\begin{equation*}
\left( u,v \right)_{H^s(D)} = \int_{D}\sum_{|\alpha| \le s} 
(\partial^{\alpha}v) (\partial^{\alpha}u)
dx .
\end{equation*}
As usual, denote by $H^s_0 (D)$ the closure of the subspace $C^{\infty}_{0} (D)$ in 
$H^{s} (D)$, where $C^{\infty}_{0} (D)$ is the linear space of functions with 
compact supports in  $D$. 

Next, given any non-integer $s > 0$, let $H^s (D)$ stand for 
the so-called Sobolev-Slobodetskii space. It can 
be defined as the completion of $C^{\infty} (\overline{D})$ with respect to the norm
$$
   \| u \|_{H^s (D)}
 = \Big( \| u \|^2_{H^{[s]} (D)}
       + \int \!\!\! \int_{D \times D}
         \sum_{|\alpha | = [s]}
         \frac{|\partial^\alpha u (x) - \partial^\alpha u (y)|^2}{|x - y|^{n+2(s-[s])}}\,
         dx dy
   \Big)^{1/2},
$$
where $[s]$ is the integer part of $s$, see \cite{Slob58}.

If the boundary $\partial D$ of the domain $D$ is sufficiently smooth, 
then, using the standard volume form $d\sigma$ on the hypersurface $\partial D$ induced 
from ${\mathbb R}^n$, 
we may consider the Sobolev-Slobodeckii spaces  $H^s(\partial D)$ on 
$\partial D$. Namely, let 
$L^2(\partial D)$ be the Lebesgue space of 
functions on  $\partial D$ with the inner product
\begin{equation*}
\left( u,v \right)_{L^2(\partial D)} = \int_{\partial D} v (x)u (x)\ d\sigma (x) .
\end{equation*}
If $0<s<1$ and $\partial D\in C^1$ then we define $H^s(\partial D)$
to be the completion of $C^{1} (\partial D)$ with respect to the norm
$$
   \| u \|_{H^s (\partial D)}
 = \Big( \| u \|^2_{L^{2} (\partial D)}
       + \int \!\!\! \int_{\partial D \times \partial D}
         \frac{|u (x) - u (y)|^2}{|x - y|^{n-1+2s}}\,
         d\sigma (x) d\sigma (y)
   \Big)^{1/2}.
$$
For $s\geq 1$ we have to consider more smooth hypersurfaces. 
For instance, if $\partial D \in C^{[s]+1}$) then we may define the space $H^s (\partial D)$
using local coordinates on $\partial D$ and a suitable partition of unity. 

It is known that the functions of $H^s (D)$, where $s > 1/2$, possess well-defined traces on 
the Lipschitz surface $\partial D$. For $s\in \mathbb N$, the trace operator 
\begin{equation} \label{eq.trace}
t_s: H^s (D) \to H^{s-1/2} (\partial D)
\end{equation}
obtained in this way acts continuously, if $\partial D\in C^s$;
moreover, in this case it possesses a bounded right inverse, see for instance 
\cite[Ch.~1, \S~8]{LiMa72}. 

Let us consider now a mathematical model describing the electrical activity of the heart. 
Assume that the myocardial domain $\Omega_m$ is surrounded by a volume conductor 
$\Omega_b $. The total domain, including the myocardium and the human torso 
$\Omega = \Omega_b \cup \overline{\Omega}_m$,  where $\overline \Omega_m$ is the closure of 
heart domain, is surrounded by a non-conductive medium (air). Assuming that intracellular, 
extracellular and extracardiac media are homogeneous and isotropic, denote by  
$M_i$, $M_e$, $M_b$ the conductivity matrices 
in the intra-, extracellular and extracardiac spaces, and  by $\nu_i$, $\nu_e$  the 
outward normal 
vectors to the surfaces of the heart and body volume ($\Omega_m$ and $\Omega_b$), respectively.

Denote by $\nabla$ the gradient operator  and by 
${\rm div}$ the divergence operator in ${\mathbb R}^n$. 
It is convenient to set 
$$
\Delta_e = -{\rm div} \, M_e \nabla , \, \Delta_i = -{\rm div} \, M_i \nabla, \, 
\Delta_b = -{\rm div} \, M_b \nabla .
$$

Assuming that 
$M_i$, $M_e$, $M_b$ are symmetric non-degenerate   $(n\times n)$-matrices 
with real entries, satisfying 
\begin{equation} \label{eq.M.pos}
\zeta \cdot  M \zeta =  \zeta^T M \zeta  >0 \mbox{ for each } \zeta \in 
{\mathbb R}^n\setminus \{0\},
\end{equation} 
we obtain strongly elliptic operators  $\Delta_e$, $\Delta_i$, $\Delta_b$ 
with constant coefficients. 

If the functions $u_i, u_e$ over $\Omega_m$, and the function $u_b$ over $\Omega_b $ 
stand for intra-, extracellular and extracardiac (electrical) 
potentials, respectively, then the  intra-, extracellular and  extracardiac 
(electrical) currents are given by
$$
J_i = -M_i \nabla u_i, \, J_e = -M_e \nabla u_e, J_b = -M_b \nabla u_b, 
$$
respectively. As the intracellular 
(electrical) charge $q_i$ and the extracellular charge $q_e$ should be 
balanced in the heart tissue, using the divergence operator, we arrive at the following 
equations involving the time variable $t$: 
\begin{equation} \label{eq.balance.charge}
\frac{\partial (q_i+q_e)}{ \partial t} =0 \mbox{ in } \Omega_m,
\end{equation}
\begin{equation} \label{eq.balance.current.i}
-\Delta_i u _i = \frac{\partial q_i}{ \partial t} +\chi I_{\rm ion} \mbox{ in } \Omega_m,
\end{equation}
\begin{equation} \label{eq.balance.current.e}
-\Delta_e u _e = \frac{\partial q_e}{ \partial t} -\chi I_{\rm  ion} \mbox{ in } \Omega_m,
\end{equation}
where $I_{\rm  ion}$ is the ionic current across the cell membrane and 
$\chi I_{\rm  ion}$ is  ionic current per unit tissue. 
Of course, the charge densities $q_i$, $q_e$ and 
the potentials $u_e$, $u_i$ are actually defined 
on different domains: extracellular and intracellular spaces, respectively.
Thus, equations \eqref{eq.balance.charge}, \eqref{eq.balance.current.i} and 
\eqref{eq.balance.current.e}  reflect the fact that a homogenization procedure 
is at the bottom of the considered model. 

Next, combining  \eqref{eq.balance.charge}, \eqref{eq.balance.current.e}
\eqref{eq.balance.current.i} we obtain the conservation law for the total 
current $(J_i+ J_e)$:
$$
\Delta_i u_i + \Delta_e  u_e  = 0 \mbox{ in } \Omega_m.
$$
In the heart surrounded by a conductor, the normal component of the total current 
should be continuous across the boundary of the heart:
\begin{equation}\label{eq.balance.boundary}
\nu_i \cdot (J_i +J_e) = \nu_i \cdot J_b \mbox{ on } \partial \Omega_m.
\end{equation}
Taking in account the current behaviour at the torso,  
 we arrive at a steady-state version of the bidomain model of the 
electrocardiography \cite{geselowitz1983}, \cite{1}, \cite{2}:
\begin{align}
\Delta_i u_i + \Delta_e  u_e  = 0 \mbox{ in } \Omega_m, \label{eq7} \\
\Delta_b  u_b  = 0\mbox{ in } \Omega_b ,\label{eq8} \\
	u_e  = u_b   \mbox{ on } \partial \Omega_m , \label{eq9} \\
	\nu_i \cdot (M_e \nabla u_e ) 
	= - \nu_e \cdot (M_b \nabla u_b)\mbox{ on }  \partial \Omega_m,  \label{eq10} \\
	\nu_i \cdot (M_i \nabla u_i)  =0 \mbox{ on } \partial \Omega_m,  \label{eq11} \\
	\nu_e \cdot (M_b \nabla u_b)  = 0 \mbox{ on } \partial \Omega,  \label{eq12}
\end{align}
where \eqref{eq10}, \eqref{eq11} are consequences of \eqref{eq.balance.boundary} and 
the assumption that the intracellular domain is completely insulated.

However,  the primary equations 
\eqref{eq.balance.charge}, \eqref{eq.balance.current.i}, 
\eqref{eq.balance.current.e}  are actually evolutionary. 
That is why the model contains a large evolutionary part, too. 
For example, \eqref{eq.balance.current.i}, 
\eqref{eq.balance.current.e} imply
\begin{equation} \label{eq.balance.cable.1}
-\Delta_i u _i + \Delta_e u _e = \frac{\partial (q_i-q_e)}{ \partial t} +2\chi I_{\rm  ion} 
\mbox{ in } \Omega_m \times (0,T),
\end{equation}
On the other hand, the so-called transmembrane potenitial 
$$
v=u_i-u_e 
$$
 satisfies \begin{equation} \label{eq.balance.cable.2}
u_i-u_e = \frac{1}{2} \frac{q_i-q_e}{\chi \, C_m} \mbox{ in } \Omega_m
\end{equation}
where $C_m$ is the capacitance of the cell membrane. 
Thus, using \eqref{eq.balance.cable.1} and \eqref{eq.balance.cable.2} we arrive 
at the so-called cable equation
\begin{equation} \label{eq.balance.cable.0}
\frac{1}{2\chi}\Big(-\Delta_i u _i + \Delta_e u _e\Big) = 
 C_m \frac{\partial (u_i-u_e)}{ \partial t} + I_{\rm  ion} 
\mbox{ in } \Omega_m \times (0,T),
\end{equation}
see, for instance, \cite[\S 2.2.2]{2}. Of course, there 
are many possibilities to supplement the model by more 
advanced and complicated relations. 
 
We begin the discussion from the 
 following problem that 
is known as the steady inverse problem of the electrocardiography. 

\begin{problem} \label{pr.inverse.inside}
 Given 
 the values of electrical potential $u_b$ on the boundary of the body 
\begin{equation} 
	u_b = f_0 \mbox{ on }  \partial \Omega,
	\label{eq6}
\end{equation}
find the intracellular potential $u_i$ and extracellular potential $u_e$ in 
$\Omega_m$ and extracardiac potential $u_b$ in $ \Omega_b$, 
 satisfying equations \eqref{eq7}, \eqref{eq8} and 
boundary conditions \eqref{eq9}-\eqref{eq12}. 
\end{problem}

Note that Problem \ref{pr.inverse.inside} includes the Cauchy problem \eqref{eq8}, 
\eqref{eq12}, \eqref{eq6} for the elliptic operator $\Delta_b$ that is 
usually ill-posed in all the standard function spaces, see, for instance, 
\cite{Lv1}, \cite{Tark36},  or elsewhere. The uniqueness 
theorem for the Cauchy problem   
related to elliptic equations (see, for example, \cite[Theorem 2.8]{ShTaLMS}) 
provides the uniqueness of the potential $u_b$ in the Lebesgue and the Sobolev type spaces  
 if it exists. However, the uniqueness of the potentials $u_i$,  $u_e$, 
satisfying relations \eqref{eq7}-\eqref{eq12}, \eqref{eq6} 
and the transmembrane potential $v $ was not mathematically established. 

The uniqueness of solutions to Problem \ref{pr.inverse.inside} 
was investigated in \cite{2K} in 
Hardy type spaces over smooth domains: 
$$
{\mathcal H} (\Omega) = \{ u\in H^1 (\Omega), \, \frac{\partial u}{\partial \nu}
\in L^2 (\partial\Omega), \, 
\Delta u\in L^2 (\Omega) \}
$$
where $\Delta = \sum_{j=1}^n \frac{\partial^2}{\partial x^2_j}$ 
is the usual Laplace operator in ${\mathbb R}^n$ and 
$\frac{\partial}{\partial \nu}$ is the normal derivative 
with respect to $\partial \Omega$.  
However it was essential in \cite{2K}  that the matrices $M_i$, $M_e$, 
were proportional:
\begin{equation*}
M_e = \gamma M_i 
\end{equation*}
with some positive number $\gamma$. In particular, this means that 
a linear change of variables reduces the consideration to the situation 
where 
\begin{equation} \label{eq.scalar}
\Delta_i = -\sigma_i \Delta, \, \Delta_e = -\sigma_e \Delta,  
\, \gamma= \frac{\sigma_e}{\sigma_i},
\end{equation}
and $\sigma_i $, $\sigma_e $, 
are positive numbers characterizing the 
 electrical conductivity of the corresponding media. 
It was proved that under this very restrictive assumption the null-space of 
Problem \ref{pr.inverse.inside} consists of all the triples 
\begin{equation} \label{eq.nullspace.inside.prop}
	\left\{ 
	\begin{array}{ccccc}
		 u_b  & = & 0 & \rm{in} & \Omega_b,\\
	 u_e  & = & u & \rm{in} & \Omega_m,\\
	 u_i  & = & -\frac{\sigma_e}{\sigma_i}  u + c& \rm{in} & \Omega_m,\\
\end{array}
\right.
\end{equation}
 where $c$ is an arbitrary constant and $u $ is an arbitrary function 
from $ {\mathcal H} (\Omega_m)$ satisfying 
$$
u= \frac{\partial u}{\partial \nu} = 0 \mbox{ on } \partial \Omega_m,
$$
cf. Theorem \ref{t.bidomain.null.gen} below. 
In this way the so-called transmembrane potential $v$
can be defined on $\partial \Omega_m$ up to an arbitrary constant summand $c$; 
it can be uniquely defined on $\partial \Omega_m$
if we  supplement the bidomain model with the following 
calibration assumption that always is achievable for isotropic conductivity:
there is a constant $c_0$ such that 
\begin{equation} \label{eq.calibration}
\int_{\partial \Omega_m} (u_i + c_0 u_e)(y) d\sigma (y) =0.
\end{equation}

As for the Existence Theorem for Problem \ref{pr.inverse.inside},
 the ill-posed Cauchy problem \eqref{eq8}, \eqref{eq12}, \eqref{eq6} can be treated 
with the standard regularization methods described in \cite{KMF91}, \cite{Lv1},  
 \cite{Tark36}. Moreover for the potentials $u_i, u_e$ in this 
very particular case  we have 
\begin{equation} \label{eq.sol.ui.prop}
u_i =-\frac{\sigma_e}{\sigma_i} u_e +\frac{\sigma_e}{\sigma_i} {\mathcal N}_i (0, \nu_i \cdot (M_b \nabla u _b)) +c
\end{equation}
where 
$c$ is an arbitrary constant, $u_e $ is an arbitrary function 
from $ {\mathcal H} (\Omega_m)$ satisfying \eqref{eq9}, \eqref{eq10} and 
${\mathcal N}_i (g, u_0)$ is the unique solution to the Neumann problem
\begin{equation} \label{eq.Neumann.M}
\left\{ \begin{array}{lll}
\Delta_i {\mathcal N}_i (g, u_0) =g & \rm{in} & \Omega_m,\\
\nu_i \cdot (M_i \nabla u ) {\mathcal N}_i (g, u_0) = u_0  & \rm{on} & \partial \Omega_m,\\
\end{array}
\right.
\end{equation}
satisfying 
\begin{equation} \label{eq.Neumann.normilize}
\int_{\partial D} {\mathcal N}_i (g, u_0)(x) d\sigma(x) =0.
\end{equation}
Of course, Problem \ref{eq.Neumann.M} is not always solvable but it 
has the Fredholm property in many Sobolev type 
spaces. More precisely, it is solvable in the standard Sobolev spaces
 if and only if 
\begin{equation} \label{eq.Green.M.Neumann}
\int_{\partial \Omega_m}  u_0 d\sigma + \int_{\Omega_m} g dx =0,
\end{equation}
see, for instance, \cite{Simanca1987}. The last identity can be easily 
verified for the data chosen in \eqref{eq.sol.ui.prop} because 
of the relations in the bidomain model, see also Theorem \ref{t.bidomain.exists.gen} below 
for a more general situation.  
Again, if calibration assumption \eqref{eq.calibration} holds for the pair $u_i$, $u_e$  
then the constant $c$ in \eqref{eq.sol.ui.prop} may be uniquely defined by 
\begin{equation}  \label{eq.const}
c= -c_0\Big(\int_{\partial \Omega_m}  d\sigma (y) \Big)^{-1}
\int_{\partial \Omega_m}  u_b (y) d\sigma (y) . 
\end{equation}
In particular, this scheme gives a possibility to find the potential $u_b$ in $\Omega_b$ 
and the potentials $u_i, u_e, v$ on $\partial \Omega_m$.

But, from mathematical point of view, this means that in the present form the 
steady part of the bidomain model \eqref{eq7}--\eqref{eq12}, \eqref{eq6}  has too many 
degrees of freedom. It seems, that one  equation related to the  
potentials $u_i$, $u_e$  in $\Omega_m$ is still missing.

In the next sections we will discuss what kind of equation 
can be added to steady bidomain model even in a much more general situation in order 
to provide uniqueness of its solutions. We will also discuss the 
uniqueness of solutions to an  evolutionary bidomain model.

\section{A more general steady problem}
\label{s.bidomain.gen}

Let us consider a more general steady problem. 
With this purpose, recall that a linear (matrix) differential operator 
$$
A (x,\partial) = \sum_{|\alpha|\leq m} A_\alpha (x) \partial^\alpha
$$  
 of order $m$  with $(l\times k)$-matrices $A_\alpha (x) $, having 
entries from $C^\infty (X)$ on an open set $X \subset {\mathbb R}^n$, 
is called an operator  with injective symbol on $X$ 
if $l\geq k$ and for its principal symbol
$$
\sigma(A) (x,\zeta) = \sum_{|\alpha|= m} A_\alpha (x) \zeta^\alpha
$$ 
we have 
$$
\mathrm{rang} \, ( \sigma(A) (x,\zeta) )=k \mbox{ for any } x\in X , \zeta \in {\mathbb R}^n 
\setminus \{0\}.
$$
An operator $A$  is called (Petrovsky) elliptic, if $l=k$ and 
its symbol is injective (or, the same, non-degenerate) on $X$. 

Then let $S_{A}(D)$ be the space of generalized solutions to the equation 
$Ah = 0$ in a domain $D$. If the operator $A$ has an injective symbol and its 
coefficients are real analytic, then 
the Petrovsky theorem yields that the elements of the space
$S_{A}(D)$ are real analytic vector functions in $D$.

An operator $L (x,\partial)$ is called strongly elliptic if 
it is elliptic, its order is even (and equals to  $2m$) 
and there is a positive constant $c_0$ such that 
$$
(-1)^{m}\Re{\, (w^*\sigma(L)} (x,\zeta) \,w)\geq c_0 |\zeta|^{2m} |w|^2
\mbox{ for any } x\in X , \zeta \in {\mathbb R}^n , 
w \in {\mathbb C}^k
$$
where $w^* = \overline w^T$ and $w ^T$ is the transposed vector 
for $w \in {\mathbb C}^k$. 

Actually, any operator $A^*A$ is strongly elliptic of order $2m$ if the principal symbol 
of $A$ is injective and  
$$
A^* (x,\partial) = \sum_{|\alpha|\leq m} (-1)^{|\alpha|} \partial^\alpha (A^*_\alpha (x) \cdot)
$$
is the formal adjoint for $A$ with the adjoint matrices $A^*_\alpha (x) $. 
The typical operator of such type are the (minus) Laplacians 
$$
-\Delta = \nabla^*\nabla = - {\rm div} \, \nabla, \, 
\Delta_M = -{\rm div} \, M \nabla,
$$
where $M$ is a self-adjoint non-negative non-degenerate $(n\times)$-matrix 
with constant entries. 

Next, we recall that a set of linear differential 
operators $\{B_0,B_1, \dots B_{m-1}\}$ is called a $(k\times k)$-matrix 
Dirichlet system of order $(m-1)$ on $\partial D$ if 

1) the operators are defined in a neighbourhood of $\partial D$; 

2) the order of the differential operator $B_j $ equals to $j$;

3) the map $ \sigma (B_j) (x,\nu (x)) :{\mathbb C}^k \to {\mathbb C}^k$ 
is bijective for each $x \in \partial D$, where 
$\nu (x)$ will denote the outward normal vector to the hypersurface $\partial D$
at the point $x\in \partial D$, see \cite{Roit96}, \cite[\S 9.2.2]{Tark36}.

According to the Trace Theorem, if $\partial D\in C^{s}$, $s\geq m\geq 1$ then 
each operator $B_j$ induces a bounded linear operator
\begin{equation} \label{eq.trace.B_j}
B_j: [H^s (D)]^k \to [H^{s-j-1/2} (\partial D)]^k.
\end{equation}
Easily, if a first order operator $A$ has injective symbol in a neighbourhood 
of the closure $\overline D$ of a smooth domain $D$ then 
the pair $\{I_k, \sigma ^* (A)(\nu) A  \}$ is a Dirichlet system of the second order 
near $\partial D$ where $I_k$ is the unit $(k\times k)$-matrix.

Now we may proceed with the formulation of the transmission problem. 

Let $\Omega_m$ and  
$\Omega$ be smooth bounded domains in ${\mathbb R}^n$, $n\geq 2$,  such that 
$\Omega \supset \overline \Omega_m$ where $\overline \Omega_m$ is the closure of the  
domain $\Omega_m$. We set $\Omega_b = \Omega \setminus \overline \Omega_m$. 
Denote by  
$A^{(i)}$, $A^{(e)}$, $A^{(b)}$ matrix differential operators 
with \mbox{real analytic} 
coefficients and  injective symbols  in some neighbourhoods $U_m$ and $U_b$  of 
the compacts $\overline{\Omega}_m$ and $\overline{\Omega}_b$, respectively. 
Then the differential operators  
$$
\Delta^{(e)} = (A^{(e)})^* A^{(e)} , \, \Delta^{(b)} = (A^{(i)})^* A^{(i)}, \, 
\Delta^{(b)} = (A^{(b)})^* A^{(b)}
$$
are elliptic and strongly elliptic over $U_m$ and $U_b$, respectively. 

We also fix the first order $(k\times k)$-matrix boundary operators 
 $B^{(b)}_1$ near $\partial \Omega_b$ and 
$B^{(i)}_1$, $B^{(e)}_1$ near $\partial \Omega_m$ such that 
$(I_k, B^{(b)}_1)$, $(I_k, B^{(i)}_1)$ and $(I_k, B^{(e)}_1)$ 
are Dirichlet pairs and 
\begin{equation} \label{eq.Bi}
\int_{\partial \Omega_m} v^* B^{(i)}_1 u d \sigma = 
\int_{\Omega_m} \Big( (A^{(i)}v)^* A^{(i)}u -v^* \Delta ^{(i)} u\Big)dy , 
\end{equation}
\begin{equation} \label{eq.Be}
\int_{\partial \Omega_m} v^* B^{(e)}_1 u d \sigma = 
\int_{\Omega_m} \Big( (A^{(e)}v)^* A^{(e)}u -v^* \Delta ^{(e)} u \Big)dy 
\end{equation}
for all $u \in [H^2 (\Omega_m)]^k$, $v \in [H^1 (\Omega_m)]^k$, 
\begin{equation} \label{eq.Bb}
\int_{\partial \Omega_b} v^* B^{(b)}_1 u d \sigma = 
\int_{\Omega_b} \Big( (A^{(b)}v)^* A^{(b)}u-v^* \Delta ^{(b)} u  \Big)dy 
\end{equation}
for all $u \in [H^2 (\Omega_b)]^k$, $v \in [H^1 (\Omega_b)]^k$. For instance, one may take 
\begin{equation}\label{eq.boundary.op}
B^{(b)}_1 = \sigma ^* (A^{(b)})(\nu_e)  A^{(b)}, \, 
B^{(i)}_1 = \sigma ^* (A^{(i)})(\nu_i)  A^{(i)}, \, 
B^{(e)}_1 =\sigma ^* (A^{(e)})(\nu_i)  A^{(e)},
\end{equation}
where $\nu_i$ and $\nu_e$  are the outward normal 
vectors to the surfaces $\Omega_m$ and $\Omega_b$, 
respectively, because by Ostrogradsky-Gauss formula we have:
\begin{equation} \label{eq.Green.M.one}
\int_{\partial D} v^* \sigma^* (A)(\nu) A  u d\sigma = 
\int_{D} \Big(  (A v)^*  A u   - v ^* A^*A u \Big) dx
\end{equation}
for all  $v \in [H^1 (D)]^k$, $u\in  [H^2 (D)]^k$ and 
any  first order differential operator $A$ with an injective symbol over $\overline D$. 

\begin{problem} \label{pr.inverse.inside.gen}
Let $s\geq 2$ and $\alpha_i, \alpha_e, \beta_e,  \beta_i  
\in \mathbb R$, $\alpha_e^2 +\alpha_i ^2  \ne 0$, $\beta_e^2 +\beta_i ^2\ne 0$. 
Given vector functions 
$$
f\in [H^{s-2} (\partial \Omega)]^k, \,  
f_0\in [H^{s-1/2} (\partial \Omega)]^k, \,  
f_1\in [H^{s-3/2} (\partial \Omega)]^k, 
$$
find,
if possible,  vector functions  $u_i, u_e \in [H^s (\Omega_m)]^k$, 
$u_b \in [H^s (\Omega_b)]^k$  satisfying 
\begin{align}
\alpha_i \Delta^{(i)} u_i + \alpha_e  \Delta^{(e)}  u_e  = 0 \mbox{ in } \Omega_m, 
\label{eq7.gen} \\
\Delta^{(b)}  u_b  = f\mbox{ in } \Omega_b ,\label{eq8.gen} \\
	u_e  = u_b   \mbox{ on } \partial \Omega_m , \label{eq9.gen} \\
			B^{(e)}_1 u_e  
	= 		\beta_e B^{(b)}_1	u_b \mbox{ on }  \partial \Omega_m , \label{eq10.gen} \\
	B^{(i)}_1 u_i
	=  \beta_i 
	B^{(b)}_1 u_b 
	\mbox{ on } \partial \Omega_m,  \label{eq11.gen} \\
	B^{(b)}_1 u_b  = f_1   
	\mbox{ on } \partial \Omega,  \label{eq12.gen} \\
	u_b = f_0 \mbox{ on }  \partial \Omega.
	\label{eq6.gen}
\end{align}
\end{problem}

Besides we will use below an assumption that is similar to calibration 
relation \eqref{eq.calibration}:
there is a constant $c_0$ such that 
\begin{equation} \label{eq.calibration.gen}
\int_{\partial \Omega_m} h^* (y) (u_i + c_0 u_e)(y) d\sigma (y) =0 
\mbox{ for all } h \in S_{A^{(i)} } (\Omega_m) \cap [H^s (\Omega_m)]^k.
\end{equation}

Problem \ref{pr.inverse.inside.gen} was considered in \cite{Shef} in 
the Sobolev spaces over smooth domains  
under the following rather restrictive but partially natural assumptions:

\begin{enumerate}
\item[1)]  the coefficients of the operators $A^{(i)}$, $A^{(e)}$, $A^{(b)}$ 
are constants;
\item[2)] there is a constant $\gamma>0$ such that
$A^{(e)} = \sqrt{\gamma} A^{(i)}$;
\item[3)]
the spaces  
$S_{A ^{(e)} }(\Omega_m)\cap [H^2 (\Omega_m)]^k$ and 
$S_{A^{(b)}}(\Omega)\cap [H^2 (\Omega)]^k$ 
coincide;
\item[4)] boundary operators 
$B^{(b)}_1$, $B^{(i)}_1$, $B^{(e)}_1$ are given by \eqref{eq.boundary.op};
\item[5)]
$\alpha_i =1$, $\alpha_e =1$, 
$\beta_e=-1$, $\beta_i=0$,  $f_1\equiv 0$, $f\equiv 0$; 
\item[6)]
the Shapiro-Lopatinsky conditions are 
fulfilled for the pair of differential 
operators $(\Delta ^{(i)}, B^{(i)}_1 )$ over 
$\overline{\Omega}_m$, see \cite[Chapter 1, \S 3, 
condition II for $q=0$]{Agr}, \cite{Shap} or elsewhere. 
\end{enumerate}

Only assumption 6) from the list is essential for our 
considerations because relations \eqref{eq7.gen}, \eqref{eq11.gen} 
lead us to the following Neumann problem:
given  pair $g \in [H^{s-2} (\Omega_m)]^k$ and $u_1 \in [H^{s-3/2} (\partial \Omega_m)]^k$, 
find, if possible,  a function $u \in [H^{s} (\Omega_m)]^k$ such that 
\begin{equation} \label{eq.Neumann.M.gen}
\left\{ \begin{array}{lll}
\Delta^{(i)} u =g & \rm{in} & \Omega_m,\\
	B^{(i)}_1
	u = u_1  & \rm{on} & \partial \Omega_m,\\
\end{array}
\right.
\end{equation}
see, for instance, \cite{Simanca1987}. 
More precisely, the Shapiro-Lopatinsky conditions provide that problem 
\eqref{eq.Neumann.M.gen} has the Fredholm property. 
Practically, under \eqref{eq.Bi}, they are equivalent to the following 
bound: there is a positive constant $c_i$ such that  
$$
\|u\|_{H^1 (\Omega_m)}\leq c _i\|A^{(i)}u \|_{L^2 (\Omega_m)} 
\mbox{ for all } u \in (S_{A^{(i)}}(\Omega_m)\cap [H^1 (\Omega_m)]^k)^\bot
$$
where $(S_{A^{(i)}}(\Omega_m)\cap [H^1 (\Omega_m)]^k)^\bot$ stands for 
the orthogonal complement of the subspace $S_{A^{(i)}}(\Omega_m)\cap [H^1 (\Omega_m)]^k$ 
in the Hilbert space $ [H^1 (\Omega_m)]^k$. 
In particular, the Shapiro-Lopatinsky conditions guarantee that the space 
$S_{A^{(i)}}(\Omega_m)\cap [H^s (\Omega_m)]^k$ is 
finite dimensional. Actually, the following theorem holds true.

\begin{theorem} \label{t.Neumann.M.gen}
Let $s \in \mathbb N$, $s\geq 2$, and $\partial \Omega_m\in C^s$. 
If Shapiro-Lopatinsky conditions holds true 
the pair $(\Delta^{(i)}, B^{(i)}_1  )
$ and 
\eqref{eq.Bi} is fulfilled then 
Neumann problem \eqref{eq.Neumann.M.gen} is solvable if and only if 
\begin{equation} \label{eq.Green.M.Neumann.gen}
\int_{\partial \Omega_m}  h^* u_1 d\sigma = \int_{\Omega_m } h^* g dx 
\mbox{ for all } h \in S_{A^{(i)}}(\Omega_m)\cap [H^s (\Omega_m)]^k.  
\end{equation}
The null-space of problem \eqref{eq.Neumann.M.gen}  coincides 
with $S_{A^{(i)}}(\Omega_m)\cap [H^s (\Omega_m)]^k$. 
Moreover, under \eqref{eq.Green.M.Neumann} there is only one solution $u$ satisfying 
\begin{equation} \label{eq.Neumann.normilize.gen}
\int_{\partial D} h^*(x) u(x) d\sigma(x) =0 
\mbox{ for all } h \in S_{A^{(i)}}(\Omega_m)\cap [H^s (\Omega_m)]^k.
\end{equation}
\end{theorem}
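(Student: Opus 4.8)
The plan is to prove this Neumann-type theorem as an application of the standard Fredholm theory for strongly elliptic boundary value problems satisfying the Shapiro-Lopatinsky conditions, combined with the Green formula \eqref{eq.Bi}. The key structural input is that $\Delta^{(i)} = (A^{(i)})^* A^{(i)}$ is strongly elliptic and that the pair $(\Delta^{(i)}, B^{(i)}_1)$ is assumed to satisfy Shapiro-Lopatinsky; this guarantees that the operator
$$
(\Delta^{(i)}, B^{(i)}_1): [H^s(\Omega_m)]^k \to [H^{s-2}(\Omega_m)]^k \oplus [H^{s-3/2}(\partial\Omega_m)]^k
$$
is Fredholm, so its range is closed and of finite codimension, and its kernel is finite-dimensional.

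First I would identify the kernel. A function $u$ lies in the null-space precisely when $\Delta^{(i)} u = 0$ in $\Omega_m$ and $B^{(i)}_1 u = 0$ on $\partial\Omega_m$. Applying \eqref{eq.Bi} with $v = u$ gives $\int_{\Omega_m} |A^{(i)} u|^2\, dy = \int_{\partial\Omega_m} u^* B^{(i)}_1 u\, d\sigma - \int_{\Omega_m} u^* \Delta^{(i)} u\, dy = 0$, hence $A^{(i)} u = 0$, i.e. $u \in S_{A^{(i)}}(\Omega_m)$. Conversely, any $h \in S_{A^{(i)}}(\Omega_m) \cap [H^s(\Omega_m)]^k$ satisfies $\Delta^{(i)} h = (A^{(i)})^* A^{(i)} h = 0$, and by \eqref{eq.Bi} (now with $u = h$ and arbitrary test $v$) the boundary form $\int_{\partial\Omega_m} v^* B^{(i)}_1 h\, d\sigma$ vanishes for all $v \in [H^1(\Omega_m)]^k$, which forces $B^{(i)}_1 h = 0$. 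This establishes that the null-space equals $S_{A^{(i)}}(\Omega_m) \cap [H^s(\Omega_m)]^k$, and its finite-dimensionality follows from the Shapiro-Lopatinsky a priori estimate quoted just before the theorem.

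Next I would derive the solvability condition \eqref{eq.Green.M.Neumann.gen}. The necessity is immediate from \eqref{eq.Bi}: if a solution $u$ exists, then for every $h \in S_{A^{(i)}}(\Omega_m) \cap [H^s(\Omega_m)]^k$ we write \eqref{eq.Bi} with $v = h$ and use $A^{(i)} h = 0$ to get $\int_{\partial\Omega_m} h^* B^{(i)}_1 u\, d\sigma = -\int_{\Omega_m} h^* \Delta^{(i)} u\, dy$, which upon substituting $\Delta^{(i)} u = g$ and $B^{(i)}_1 u = u_1$ yields exactly \eqref{eq.Green.M.Neumann.gen}. For sufficiency I would invoke that the Fredholm property makes the range closed with codimension equal to $\dim(\text{cokernel})$; the formal-adjoint boundary problem to $(\Delta^{(i)}, B^{(i)}_1)$ has its solution space identified (via the self-adjointness of $\Delta^{(i)}$ and the Green formula) with $S_{A^{(i)}}(\Omega_m) \cap [H^s(\Omega_m)]^k$, so the codimension of the range matches the number of orthogonality constraints in \eqref{eq.Green.M.Neumann.gen}, and these constraints therefore exactly cut out the range.

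I expect the main obstacle to be the sufficiency direction, i.e. matching the codimension of the range with the orthogonality relations \eqref{eq.Green.M.Neumann.gen} rather than merely bounding it. One must verify that the cokernel of the Fredholm operator is genuinely described by the same space $S_{A^{(i)}}(\Omega_m) \cap [H^s(\Omega_m)]^k$ that describes the kernel — this is where the formal self-adjointness $\Delta^{(i)} = (\Delta^{(i)})^*$ together with the symmetric Green identity \eqref{eq.Bi} does the essential work, but pinning it down rigorously requires the adjoint boundary-value-problem formalism from \cite{Simanca1987} or the general elliptic theory. The final normalization claim \eqref{eq.Neumann.normilize.gen} is then routine: since the solution is unique modulo the finite-dimensional kernel $S_{A^{(i)}}(\Omega_m) \cap [H^s(\Omega_m)]^k$, and the pairing $(h_1, h_2) \mapsto \int_{\partial\Omega_m} h_1^* h_2\, d\sigma$ restricted to this kernel is nondegenerate (an injective-symbol solution vanishing on $\partial\Omega_m$ together with its conormal data vanishes identically by the uniqueness of the Cauchy problem for $\Delta^{(i)}$), one can subtract a unique kernel element to enforce the orthogonality conditions \eqref{eq.Neumann.normilize.gen}.
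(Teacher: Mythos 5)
Your proposal is correct in substance, but note that the paper itself supplies no argument for this theorem: its entire proof is the single line ``See, for instance, \cite{Simanca1987}.'' What you have written is essentially the standard Fredholm/Green-formula argument that this citation stands in for, so the comparison is really between a deferred proof and a worked one. Your kernel identification (testing \eqref{eq.Bi} with $v=u$ to get $\|A^{(i)}u\|_{L^2}^2=0$, and conversely reading off $B^{(i)}_1 h=0$ from the vanishing of the boundary form for all $v$) and your necessity argument are complete and correct, and your honest assessment that the only genuinely nontrivial step is identifying the cokernel with $S_{A^{(i)}}(\Omega_m)\cap[H^s(\Omega_m)]^k$ --- for which you, like the authors, ultimately lean on \cite{Simanca1987} --- is exactly right. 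The normalization step also works, though the cleanest route to nondegeneracy of the boundary pairing is the Cauchy problem for the first-order operator $A^{(i)}$ with zero trace on all of $\partial\Omega_m$ (this is precisely how the paper argues in the proof of Theorem \ref{t.bidomain.null.gen}); your version via the Dirichlet pair $(I_k,B^{(i)}_1)$ for $\Delta^{(i)}$ is also valid since kernel elements satisfy $B^{(i)}_1h=0$ as well.

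One concrete point to correct: your intermediate identity $\int_{\partial\Omega_m}h^*B^{(i)}_1u\,d\sigma=-\int_{\Omega_m}h^*\Delta^{(i)}u\,dy$ is right, but substituting $\Delta^{(i)}u=g$ and $B^{(i)}_1u=u_1$ then yields $\int_{\partial\Omega_m}h^*u_1\,d\sigma=-\int_{\Omega_m}h^*g\,dx$, which is the \emph{opposite} sign to the printed condition \eqref{eq.Green.M.Neumann.gen}. Your sign agrees with the scalar prototype \eqref{eq.Green.M.Neumann} (which reads $\int_{\partial\Omega_m}u_0\,d\sigma+\int_{\Omega_m}g\,dx=0$), so the plus sign in \eqref{eq.Green.M.Neumann.gen} is evidently a misprint in the statement; you should say so rather than claim your computation reproduces the printed formula ``exactly.''
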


\begin{proof} See, for instance, \cite{Simanca1987}.
\end{proof}

The unique solution to \eqref{eq.Neumann.M.gen}  satisfying 
\eqref{eq.Neumann.normilize.gen} will be denoted by ${\mathcal N}^{(i)} (g,u_1)$. 

Let us discuss the new steady transmission problem, conditions providing the uniqueness 
of its solutions and the ways to weaken exceedingly strong assumptions 1)--5) above. 

First, we note, that similarly to Problem \ref{pr.inverse.inside}, 
Problem \ref{pr.inverse.inside.gen} includes the Cauchy problem \eqref{eq8.gen}, 
\eqref{eq12.gen}, \eqref{eq6.gen} for the elliptic operator $\Delta^{(b)}$ that is 
usually ill-posed in all the standard function spaces, see, for instance, 
\cite{Lv1}, \cite{Tark36},  or elsewhere. However, the uniqueness 
theorem for the Cauchy problem related to elliptic equations (see, for example, 
\cite[Theorems 2.8]{ShTaLMS}) provides the uniqueness of the vector function
 $u_b$ in the Lebesgue and the Sobolev type spaces  if it exists. Moreover, 
since the operator $\Delta^{(b)}$ is elliptic and its coefficients are real analytic, 
it admits a bilateral fundamental solutions to the operator, say  $\varphi_b $, in a 
neighbourhood of the compact $\overline \Omega_b$. 
Let us indicate a solvability criterion and  
formulas for its solutions based on results from  \cite{ShTaLMS}, \cite{FeSh14} and 
\cite{Tark36}. With this purpose, we set 
$$
F(x) = \int_{\Omega_b} \Big(  \varphi _b (x,y)^* f(y) dy + 
$$
$$
\int_{\partial \Omega} 
(B^{(b)}_1 (y,\partial _y) \varphi 
_b (x,y))^* f_0(y) 
-  (\varphi 
_b (x,y))^* f_1(y) \Big) d\sigma (y)  , \,\, x \not \in \partial \Omega. 
$$
\begin{theorem} \label{t.Cauchy.bidomain.gen} 
Let $B^{(b)}_1$ satisfy \eqref{eq.Bb}. Then 
Cauchy problem \eqref{eq8.gen}, \eqref{eq12.gen}, \eqref{eq6.gen} with 
data  in $[H^{s-2} (\Omega_b)]^k \times [H^{s-1/2} (\partial \Omega)]^k\times 
[H^{s-3/2} (\partial \Omega)]^k$ is densely solvable in 
the space $[H^s(\Omega_b)]^k$. Moreover, it has  no more that one solution 
in $[H^s(\Omega_b)]^k$. 
It is solvable for a triple $f \in [H^{s-2} (\partial \Omega)]^k$,  
$f_0 \in [H^{s-1/2} (\partial \Omega)]^k$, 
$f_1 \in [H^{s-3/2} (\partial \Omega)]^k$  if and only if there is 
a function  ${\mathcal F}\in [H^s (X\setminus \overline \Omega_m)]^k$ satisfying 
$\Delta_b {\mathcal F} = 0$ in  $X\setminus \overline \Omega_m$ and such that 
$$
{\mathcal F}(x) = 
F(x) 
$$
for all $ x \in X\setminus \overline \Omega$. 
Besides, the solution $u$, if exists, is given by the following formula
\begin{equation} \label{eq.sol.Cauchy.bidomain.gen}
u _b (x) =F(x)  - {\mathcal F}(x), \,  x \in  \Omega_b.
\end{equation}
\end{theorem}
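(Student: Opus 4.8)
The plan is to reduce everything to the Green representation formula attached to the strongly elliptic operator $\Delta^{(b)}=(A^{(b)})^*A^{(b)}$, its Dirichlet pair $(I_k,B^{(b)}_1)$, and the bilateral fundamental solution $\varphi_b$. Combining the first Green identity \eqref{eq.Bb} written for $u_b$ and for $\varphi_b(x,\cdot)$ into the second Green formula, and using that $\varphi_b$ is a two-sided fundamental solution, one gets that for every $u_b\in[H^s(\Omega_b)]^k$ the Green integral
\begin{equation*}
\int_{\Omega_b}\varphi_b(x,y)^*(\Delta^{(b)}u_b)(y)\,dy+\int_{\partial\Omega_b}\Big((B^{(b)}_1\varphi_b(x,y))^*u_b(y)-\varphi_b(x,y)^*(B^{(b)}_1u_b)(y)\Big)\,d\sigma(y)
\end{equation*}
equals $u_b(x)$ when $x\in\Omega_b$ and $0$ when $x\in X\setminus\overline\Omega_b$. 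Since $\partial\Omega_b=\partial\Omega\cup\partial\Omega_m$, I would split the surface integral into its part over $\partial\Omega$, which will carry the Cauchy data, and its part over $\partial\Omega_m$, which will produce the extension $\mathcal F$.

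For necessity together with the solution formula, assume $u_b$ solves \eqref{eq8.gen}, \eqref{eq12.gen}, \eqref{eq6.gen}. Substituting $\Delta^{(b)}u_b=f$ and the boundary data $u_b=f_0$, $B^{(b)}_1u_b=f_1$ on $\partial\Omega$, the volume term and the $\partial\Omega$-surface term become precisely $F(x)$. Writing
\begin{equation*}
\mathcal F(x)=-\int_{\partial\Omega_m}\Big((B^{(b)}_1\varphi_b(x,y))^*u_b(y)-\varphi_b(x,y)^*(B^{(b)}_1u_b)(y)\Big)\,d\sigma(y)
\end{equation*}
for the remaining $\partial\Omega_m$-potential, the representation reads $u_b=F-\mathcal F$ on $\Omega_b$, which is \eqref{eq.sol.Cauchy.bidomain.gen}. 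As a potential with density on $\partial\Omega_m$, the function $\mathcal F$ solves $\Delta^{(b)}\mathcal F=0$ on $X\setminus\partial\Omega_m$, and interior elliptic regularity gives $\mathcal F\in[H^s(X\setminus\overline\Omega_m)]^k$. Finally, evaluating the representation at $x\in X\setminus\overline\Omega$, where its left-hand side vanishes, yields $F(x)=\mathcal F(x)$ there, i.e. the matching condition.

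For sufficiency, given such an $\mathcal F$ I set $u_b:=F-\mathcal F$ on $\Omega_b$. Equation \eqref{eq8.gen} is immediate, since $\Delta^{(b)}F=f$ in $\Omega_b$ (the volume potential reproduces $f$, while the $\partial\Omega$-potentials and $\mathcal F$ are $\Delta^{(b)}$-null there). The two Cauchy conditions \eqref{eq12.gen}, \eqref{eq6.gen} are the crux: here I would invoke the jump (Plemelj type) relations across $\partial\Omega$ for the single- and double-layer potentials built from $\varphi_b$ and the pair $(I_k,B^{(b)}_1)$, which say that the difference of the interior and exterior boundary traces of $F$ on $\partial\Omega$ equals $f_0$, and that of $B^{(b)}_1F$ equals $f_1$. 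Since $\mathcal F$ is a solution across $\partial\Omega$ it has no jump there, and $\mathcal F=F$ on $X\setminus\overline\Omega$ forces the exterior traces of $\mathcal F$ and $F$ to coincide on $\partial\Omega$; subtracting then gives $u_b|_{\partial\Omega}=f_0$ and $B^{(b)}_1u_b|_{\partial\Omega}=f_1$. Formulating these jump relations and the accompanying boundary regularity for a general matrix Dirichlet system, rather than the scalar Laplacian, is the principal technical obstacle, and I expect to borrow the needed statements from \cite{Tark36} and \cite{ShTaLMS}.

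It remains to treat uniqueness and dense solvability. Uniqueness follows from the Cauchy uniqueness theorem for elliptic operators \cite[Theorem 2.8]{ShTaLMS}: the difference $w$ of two solutions satisfies $\Delta^{(b)}w=0$ in $\Omega_b$ with vanishing Cauchy data on the open portion $\partial\Omega$ of $\partial\Omega_b$, so its extension by zero across $\partial\Omega$ is $\Delta^{(b)}$-null and vanishes on a nonempty open set, whence $w\equiv0$ on the connected domain $\Omega_b$ by unique continuation. For dense solvability in $[H^s(\Omega_b)]^k$ I would argue by duality: since $\Delta^{(b)}$ is formally self-adjoint, any continuous functional annihilating the set of attainable data corresponds, through the Green formula, to a solution of the homogeneous transposed Cauchy problem with zero data on $\partial\Omega$, which is trivial by the same uniqueness theorem; hence the attainable data are dense. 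Equivalently, one may invoke the Runge type approximation for the pair $\Omega_b\subset X\setminus\overline\Omega_m$ available in \cite{FeSh14}, \cite{Tark36}, for which the solvability criterion above is automatically satisfied.
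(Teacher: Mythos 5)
Your argument is correct and is essentially the proof of the result the paper invokes: the paper's own proof of Theorem~\ref{t.Cauchy.bidomain.gen} is a direct citation of \cite[Theorems 2.8 and 5.2]{ShTaLMS} and \cite{FeSh14}, and those references establish the solvability criterion by exactly the scheme you outline (second Green formula for $\Delta^{(b)}$ with the pair $(I_k,B^{(b)}_1)$, splitting the boundary potential over $\partial\Omega$ and $\partial\Omega_m$, analytic continuation of $F$ as the criterion, jump relations for sufficiency, unique continuation for uniqueness, and duality for dense solvability). The only points requiring care beyond your sketch --- the $H^s$ regularity of the $\partial\Omega_m$-layer potentials up to $\partial\Omega_m$ from the exterior side (not merely interior regularity) and the jump formulas for a general matrix Dirichlet pair --- are precisely what is supplied by \cite{Tark36} and \cite{ShTaLMS}, as you anticipate.
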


\begin{proof} Follows from \cite[Theorems 2.8 and 5.2]{ShTaLMS}
 for the case $f=0$ and \cite{FeSh14} for $f\ne 0$ 
because both the set $S=\partial \Omega$ where the boundary Cauchy data are defined and 
its complement $\partial \Omega_b \setminus S = 
\partial \Omega_m$ are non empty and open in  the relative topology.
\end{proof}

The ill-posed Cauchy problem \eqref{eq8.gen}, \eqref{eq12.gen}, \eqref{eq6.gen} 
can be also treated with the standard regularization methods described in \cite{KMF91}, 
\cite{Lv1},  \cite{Tark36}, providing formulas for exact and approximate solutions.

Now we are ready to describe the null-space of 
Problem \ref{pr.inverse.inside.gen}. We slightly differ 
the approach of \cite{2K} and \cite{Shef} in this more general 
situation.

\begin{theorem} \label{t.bidomain.null.gen}
Let $s\geq 2$, \eqref{eq.Bi}, \eqref{eq.Be}, \eqref{eq.Bb} be fulfilled and the  
pair $(\Delta^{(i)}, 	
B^{(i)}_1 )$ satisfy Shapiro-Lopatinsky conditions in $\overline \Omega_m$. If 
$\alpha_i \ne 0$ and 
\begin{equation} \label{eq.kernels.weak}
S_{A^{(i)}} (\Omega_m) \cap [H^s (\Omega_m)]^k \subset 
S_{\Delta^{(e)}} (\Omega_m) \cap [H^s (\Omega_m)]^k 
\end{equation}
then the null-space of Problem \ref{pr.inverse.inside.gen} consists of all the triples 
$u_i$, $u_e$, $u_b$ from $[H^s (\Omega_m)]^k\times [H^s (\Omega_m)]^k \times 
[H^s (\Omega_b)]^k$ satisfying the following conditions: 
\begin{equation} \label{eq.nullspace.inside.gen}
	\left\{ 
	\begin{array}{ccccc}
	u_b  & = & 0 & {\rm in} & \Omega_b,\\ 
	u_e  & = & u & {\rm in} & \Omega_m,\\
	 u_i  & = & (-\alpha_e/\alpha_i)  {\mathcal N}^{(i)} (\Delta^{(e)} u,0) 
	+h_0& {\rm in} & \Omega_m,\\
\end{array}
\right.
\end{equation}
where $h_0$ is an arbitrary element of the finite dimensional space  
 $S_{A^{(i)}} (\Omega_m) \cap [H^s (\Omega_m)]^k$ and $u $ 
is an arbitrary vector function 
from $ [H^2_0 (\Omega_m) \cap H^s (\Omega_m)]^k$. Moreover, if 
calibration assumption \eqref{eq.calibration.gen} holds for a pair $u_i$, $u_e$ from 
the null-space then the element $h_0$ in \eqref{eq.nullspace.inside.gen} equals to zero.  
\end{theorem}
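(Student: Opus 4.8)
The plan is to peel off the three unknowns one at a time, working from the outer domain inward, and to reduce the determination of $u_i$ to the Neumann problem of Theorem \ref{t.Neumann.M.gen}. First I would dispose of $u_b$. For a null-space element the data $f$, $f_0$, $f_1$ all vanish, so $u_b$ solves the homogeneous Cauchy problem $\Delta^{(b)} u_b = 0$ in $\Omega_b$ with $u_b = 0$ and $B^{(b)}_1 u_b = 0$ on the nonempty open piece $\partial \Omega \subset \partial \Omega_b$. By the uniqueness part of Theorem \ref{t.Cauchy.bidomain.gen}, $u_b \equiv 0$ in $\Omega_b$, which is the first line of \eqref{eq.nullspace.inside.gen}. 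Passing to traces on $\partial \Omega_m$, the transmission conditions \eqref{eq9.gen}, \eqref{eq10.gen}, \eqref{eq11.gen} collapse to $u_e = 0$, $B^{(e)}_1 u_e = 0$ and $B^{(i)}_1 u_i = 0$ on $\partial \Omega_m$. Since $(I_k, B^{(e)}_1)$ is a Dirichlet pair, the two conditions on $u_e$ say precisely that its full Cauchy data vanish, whence $u_e \in [H^2_0(\Omega_m) \cap H^s(\Omega_m)]^k$; setting $u := u_e$ gives the second line of \eqref{eq.nullspace.inside.gen}.

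Next I would identify $u_i$. Because $\alpha_i \ne 0$, equation \eqref{eq7.gen} reads $\Delta^{(i)} u_i = -(\alpha_e/\alpha_i)\Delta^{(e)} u$, and together with $B^{(i)}_1 u_i = 0$ this is exactly the Neumann problem \eqref{eq.Neumann.M.gen} with $g = -(\alpha_e/\alpha_i)\Delta^{(e)} u$ and $u_1 = 0$. To invoke Theorem \ref{t.Neumann.M.gen} I must check the solvability condition \eqref{eq.Green.M.Neumann.gen}, i.e. that $\int_{\Omega_m} h^* \Delta^{(e)} u\, dy = 0$ for every $h \in S_{A^{(i)}}(\Omega_m) \cap [H^s(\Omega_m)]^k$. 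This is the heart of the argument and the place where hypothesis \eqref{eq.kernels.weak} enters, since it guarantees $\Delta^{(e)} h = 0$. I would then apply the Green identity \eqref{eq.Be} twice, once with $(v,u)=(h,u_e)$ and once with the arguments swapped, using that $u_e$ has vanishing Cauchy data to kill both boundary integrals, together with $\Delta^{(e)} h = 0$ and complex conjugation to identify $\int_{\Omega_m}(A^{(e)} h)^* A^{(e)} u_e\,dy$ with $\int_{\Omega_m} h^* \Delta^{(e)} u_e\,dy$ and simultaneously with $0$. Once solvability is secured, Theorem \ref{t.Neumann.M.gen} and the linearity of $\mathcal{N}^{(i)}$ give $u_i = -(\alpha_e/\alpha_i)\mathcal{N}^{(i)}(\Delta^{(e)} u, 0) + h_0$ with $h_0$ ranging over the null-space $S_{A^{(i)}}(\Omega_m) \cap [H^s(\Omega_m)]^k$, which is the third line of \eqref{eq.nullspace.inside.gen}.

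For the converse inclusion I would check by direct substitution that, given $u \in [H^2_0(\Omega_m)\cap H^s(\Omega_m)]^k$ and $h_0 \in S_{A^{(i)}}(\Omega_m)\cap[H^s(\Omega_m)]^k$, the triple \eqref{eq.nullspace.inside.gen} satisfies \eqref{eq7.gen}--\eqref{eq6.gen} with zero data, using $\Delta^{(i)}\mathcal{N}^{(i)}(g,0) = g$, $B^{(i)}_1 \mathcal{N}^{(i)}(g,0) = 0$, and the fact that $h_0$ lies in the Neumann null-space, so $\Delta^{(i)} h_0 = 0$ and $B^{(i)}_1 h_0 = 0$; the solvability needed to define $\mathcal{N}^{(i)}(\Delta^{(e)} u, 0)$ is again the computation of the second paragraph. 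Finally, for the calibration claim I would restrict \eqref{eq.calibration.gen} to $\partial \Omega_m$, where $u_e = 0$, leaving $\int_{\partial \Omega_m} h^* u_i\, d\sigma = 0$ for all $h \in S_{A^{(i)}}(\Omega_m)\cap[H^s(\Omega_m)]^k$. Since the $\mathcal{N}^{(i)}$-term obeys the normalization \eqref{eq.Neumann.normilize.gen}, this forces $\int_{\partial \Omega_m} h^* h_0\, d\sigma = 0$ for all such $h$; thus $h_0$ is itself an element of the Neumann null-space satisfying \eqref{eq.Neumann.normilize.gen}, so by the uniqueness clause of Theorem \ref{t.Neumann.M.gen} it coincides with $\mathcal{N}^{(i)}(0,0) = 0$. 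I expect the compatibility verification in the second paragraph to be the only genuinely delicate point; everything else is bookkeeping with traces and the cited Neumann and Cauchy results.
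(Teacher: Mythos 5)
Your proof is correct and follows essentially the same route as the paper: Cauchy uniqueness for $u_b$, the Dirichlet-pair trace argument placing $u_e$ in $[H^2_0(\Omega_m)\cap H^s(\Omega_m)]^k$, the Green identity \eqref{eq.Be} combined with \eqref{eq.kernels.weak} to secure the Neumann compatibility condition, and Theorem \ref{t.Neumann.M.gen} to describe $u_i$. The only cosmetic difference is in the calibration step, where the paper takes $h=h_0$ to conclude $h_0=0$ on $\partial\Omega_m$ and then invokes Cauchy uniqueness for $A^{(i)}$, whereas you appeal directly to the uniqueness clause of Theorem \ref{t.Neumann.M.gen}; both are valid.
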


\begin{proof} Indeed, let the triple $(u_b, u_i,u_e) \in 
[H^s (\Omega_b)]^k \times [H^s (\Omega_m)]^k \times [H^s (\Omega_m)]^k  $ belong to 
the null-space of Problem \ref{pr.inverse.inside.gen}. Hence $f \equiv 0$ in $\Omega_b$, 
$f_0=f_1\equiv 0$ on $\partial \Omega$ and then 
$u_b \equiv 0$ in $\Omega_b$ because of  Theorem \ref{t.Cauchy.bidomain.gen}. 
Of course, using \eqref{eq9.gen}, \eqref{eq10.gen}, we obtain 
$$
u _e =u_b =  B^{(e)}_1  u_e =  \beta_e B^{(b)}_1 u_b  =0 
\mbox{ on } \partial \Omega _m 
$$ 
for $u_e \in [H^s (\Omega_m)]^k$. Since the pair $(1,  B^{(e)}_1)$ 
is a Dirichlet system on $\partial \Omega_m$, then, according to \cite{HedbWolf1}, 
$u_e \in [H^2_0 (\Omega_m) \cap H^s (\Omega_m)]^k$. 
The function $u_i$ satisfies \eqref{eq7.gen} and \eqref{eq11.gen} and 
hence 
$$
B^{(i)}_1 u_i = \beta_i 
 B^{(b)}_1 u_b =0 
$$
Then, according to Theorem \ref{t.Neumann.M.gen}, 
this means precisely 
$$
u_i =(-\alpha_e/\alpha_i) {\mathcal N}^{(i)} (\Delta ^{(e)} u_e,0) +h_0
$$
with an arbitrary element $h_0\in S_{A^{(i)}} (\Omega_m) \cap [H^s (\Omega_m)]^k$. 

Thus, any triple  $(u_b, u_i,u_e) \in 
H^2 (\Omega_b) \times H^2 (\Omega_m) \times H^2 (\Omega_m)  $, belonging to 
the null-space of Problem \ref{pr.inverse.inside.gen}, has the form 
as in \eqref{eq.nullspace.inside.gen} with 
an arbitrary element $h_0$ of the finite dimensional space  
 $S_{A^{(i)}} (\Omega_m) \cap [H^s (\Omega_m)]^k$ and   an arbitrary 
vector function $ u=u_e\in [H^s (\Omega_m) \cap H^2_0 (\Omega_m)]^k$. 

Let a triple  $(u_b, u_i,u_e) \in 
[H^s (\Omega_b)]^k \times [H^s (\Omega_m)]^k \times [H^s (\Omega_m)]^k  $ have the form 
as in \eqref{eq.nullspace.inside.gen} with an 
arbitrary element $h_0\in S_{A^{(i)}} (\Omega_m) \cap 
[H^s (\Omega_m)]^k$  and an arbitrary vector 
function $ u\in [H^s (\Omega_m) \cap H^2_0 (\Omega_m)]^k$. 
Then, obviously,  $f\equiv 0$ in $\Omega$, $f_0=f_1\equiv 0$ on $\partial \Omega$. 
Moreover, integrating by parts with the use of \eqref{eq.Be}
we easily obtain
$$
\int_{\Omega_m}h^* \Delta^{(e)} u dy = \int_{\Omega_m} (\Delta^{(e)} h )^* u dy
=0 \mbox{ for all } h \in S_{A^{(i)}} (\Omega_m) \cap 
[H^s (\Omega_m)]^k
$$
because $u \in H^2 _0 (\Omega_m)$ and embedding \eqref{eq.kernels.weak} 
is fulfilled. Hence,  as \eqref{eq.Bi} holds true, Theorem \ref{t.Neumann.M.gen} implies that 
 there is a solution $w$ to Neumann problem \eqref{eq.Neumann.M.gen} for 
the operator $\Delta^{(i)}$:
\begin{equation*} 
\left\{ \begin{array}{lll}
\Delta^{(i)} w =(-\alpha_e/\alpha_i) \, \Delta^{(e)} u & \rm{in} & \Omega_m,\\
	B^{(i)}_1 w= 0  & \rm{on} & \partial \Omega_m.\\
\end{array}
\right.
\end{equation*}
According to Theorem \ref{t.Neumann.M.gen}, 
the general form of such a solution is precisely 
$$
w= (-\alpha_e/\alpha_i)  {\mathcal N}^{(i)} (\Delta_e u,0) +h_0 
$$
with 
an arbitrary element $h_0\in S_{A^{(i)}} (\Omega_m) \cap [H^s (\Omega_m)]^k$. 
If we take $u_i = w$ then 
\begin{equation} \label{eq.nullspace.inside.1.gen}
	\left\{ 
	\begin{array}{ccccc}
	 u_b  & = & 0 & \rm{in} & \Omega_b,\\
	 u_e  & = & u & \rm{in} & \Omega_m,\\
	B^{(e)}_1 	u_e
	& = & 0 & \rm{on} & \partial \Omega_m,\\ 
	u_e  & = & 0 & \rm{on} & \partial \Omega_m,\\
		\Delta^{(i)} u_i  & = & (-\alpha_e/\alpha_i)  \Delta^{(e)} u & \rm{in} & \Omega_m,\\
	B^{(i)}_1 u_i	& = & 0 & \rm{on} & \partial \Omega_m\\ 
\end{array}
\right.
\end{equation}
with any $u\in [H^2_0 (\Omega_m) \cap H^s (\Omega_m)]^k$. 

Thus, any triple  $(u_b, u_i,u_e) \in 
[H^s (\Omega_b)]^k \times [H^2 (\Omega_m)]^k \times [H^2 (\Omega_m)]^k  $ having the form 
as in \eqref{eq.nullspace.inside.gen} with a arbitrary element $h_0 
\in S_{A^{(i)}} (\Omega_m) \cap [H^s (\Omega_m)]^k$ and an arbitrary vector 
function $ u\in [H^2_0 (\Omega_m) \cap H^s (\Omega_m)]^k$ belongs to the 
null-space of Problem \ref{pr.inverse.inside.gen}. 

Finally, if calibration assumption \eqref{eq.calibration.gen} is fulfilled then, 
as $u_e =u \in [H^2 _0(\Omega_m)]^k$, condition \eqref{eq.Neumann.normilize.gen} yields 
$$
0=\int_{\partial \Omega_m} h^*(u_i + c_0 u_e)(y) d\sigma (y) 
= \int_{\partial \Omega_m} h^* h_0   d\sigma 
$$
for all $S_{A_i} (\Omega_m) \cap [H^s (\Omega_m)]^k$. In particular, 
for $h=h_0$ we obtain 
$$
\int_{\partial \Omega_m} |h_0|^2   d\sigma  =0
$$
i.e. $h_0 = 0$ on $\partial \Omega_m$. 
Finally, as $h_0 \in S_{A^{(i)}} (\Omega_m) \cap [H^s (\Omega_m)]^k$ 
we may use  the Uniqueness Theorem \cite[Theorem 2.8]{ShTaLMS}
applying it to the Cauchy problem 
\begin{equation*} 
\left\{ \begin{array}{lll}
A^{(i)} h_0 =0 & \rm{in} & \Omega_m,\\
h_0 =  0  & \rm{on} & \partial \Omega_m,\\
\end{array}
\right.
\end{equation*}
and concluding that $h_0 \equiv 0$ in $\Omega_m$.
\end{proof}

Let us formulate an existence theorem for the Problem 
\ref{pr.inverse.inside.gen}. 
With this purpose we note that the scale of Sobolev spaces can be extended 
for negative smoothness indexes, too. 
Namely, for $s > 0$, denote by $H^{-s} (D)$ the completion of
$C^{\infty} (\overline{D})$ with respect to the norm
$$
   \| u \|_{H^{-s} (D)}
 = \sup_{\substack{v \in C^{\infty}_{\mathrm{comp}} (D) \\ v \ne 0}}
   \frac{|(v,u)_{L^2 (D)}|}{\| v \|_{H^{s} (D)}}.
$$
Actually, $H^{-s} (D)$ can be identified with the dual of    $H^s_0 (D)$
with respect to the pairing induced by $(\cdot, \cdot)_{L^2 (D)}$.

We denote also by $\Pi_0$  the $[L^2 (\partial \Omega_m)]^k$-orthogonal 
projection onto the closed finite dimensional subspace 
$S_{A_i} (\Omega_m) \cap [H^s (\Omega_m)]^k \subset [L^2 (\partial \Omega_m) ]^k$.
Also, we assume that the following relations are fulfilled:
\begin{equation} \label{eq.kernels.strong.A}
S_{A^{(i)}} (\Omega_m) \cap [H^s (\Omega_m)]^k \subset 
S_{A^{(e)}} (\Omega_m) \cap [H^s (\Omega_m)]^k,  
\end{equation}
\begin{equation} \label{eq.kernels.strong.B}
S_{A^{(i)}} (\Omega_m) \cap [H^s (\Omega_m)]^k 
= S_{A^{(i)}} (\Omega) \cap [H^s (\Omega)]^k \subset 
S_{A^{(b)}} (\Omega_b) \cap [H^s (\Omega_b)]^k. 
\end{equation}

\begin{theorem} \label{t.bidomain.exists.gen}
Let $s\geq 2$, \eqref{eq.Bi}, \eqref{eq.Be},  \eqref{eq.Bb} be fulfilled and 
 the  pair $(\Delta^{(i)}, B^{(i)}_1)$  
satisfy 
Shapiro-Lopatinsky conditions in $\overline \Omega_m$  
and let  embeddings \eqref{eq.kernels.strong.A}, \eqref{eq.kernels.strong.B}
hold true. If $\alpha_i \ne 0$ then, given $f \in [H^{s-2} (\partial \Omega_b)]^k$, 
$f_0 \in [H^{s-1/2} (\partial \Omega_b)]^k$, 
$f_1 \in [H^{s-3/2} (\partial \Omega_b)]^k$
admitting 
the solution $u_b\in [H^s (\Omega_b)]^k$  to \eqref{eq8.gen}, \eqref{eq12.gen} and 
\eqref{eq6.gen}, 
there are functions $u_e,u_i\in [H^s (\Omega_m)]^k$
satisfying \eqref{eq7.gen}, \eqref{eq9.gen}, \eqref{eq10.gen}, \eqref{eq11.gen} if and only 
if 
\begin{equation}\label{eq.trans}
(\beta_e \alpha_e + \alpha_i \beta_i  ) 
\Big(\int_{\Omega_b} h^* (y) f(y) dy + 
\int_{\partial \Omega_b} h^* (y) f_1(y) d\sigma (y) \Big)=0
\end{equation}
for all $ h \in S_{A^{(b)}} (\Omega_b) \cap [H^s (\Omega_b)]^k$. 
\end{theorem}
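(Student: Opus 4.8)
The plan is to regard $u_b\in[H^s(\Omega_b)]^k$ as given (by hypothesis it solves the Cauchy problem \eqref{eq8.gen}, \eqref{eq12.gen}, \eqref{eq6.gen}) and to build $u_e$ and then $u_i$, thereby reducing the solvability of \eqref{eq7.gen}, \eqref{eq9.gen}, \eqref{eq10.gen}, \eqref{eq11.gen} to the Fredholm criterion of Theorem \ref{t.Neumann.M.gen}. First I would construct $u_e$ by prescribing only its boundary data. Since $(I_k,B^{(e)}_1)$ is a Dirichlet system of order one and $\partial\Omega_m\in C^s$, the Trace Theorem \eqref{eq.trace.B_j} shows that $u_e\mapsto(u_e|_{\partial\Omega_m},B^{(e)}_1u_e|_{\partial\Omega_m})$ maps $[H^s(\Omega_m)]^k$ onto $[H^{s-1/2}(\partial\Omega_m)]^k\times[H^{s-3/2}(\partial\Omega_m)]^k$ and admits a bounded right inverse. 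Feeding it the traces $u_b|_{\partial\Omega_m}$ and $\beta_e B^{(b)}_1u_b|_{\partial\Omega_m}$ --- which lie in the required spaces because $u_b\in[H^s(\Omega_b)]^k$ and $B^{(b)}_1$ is of first order --- produces some $u_e\in[H^s(\Omega_m)]^k$ satisfying \eqref{eq9.gen} and \eqref{eq10.gen}. This $u_e$ is highly non-unique, but only its boundary values will enter the compatibility analysis.

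With $u_e$ fixed, equations \eqref{eq7.gen} and \eqref{eq11.gen} recast the search for $u_i$ as the Neumann problem \eqref{eq.Neumann.M.gen} with right-hand side $g=-(\alpha_e/\alpha_i)\Delta^{(e)}u_e$ (here $\alpha_i\ne0$ is used) and boundary datum $u_1=\beta_iB^{(b)}_1u_b$. By Theorem \ref{t.Neumann.M.gen} such a $u_i\in[H^s(\Omega_m)]^k$ exists if and only if the orthogonality relation \eqref{eq.Green.M.Neumann.gen} holds for every $h\in S_{A^{(i)}}(\Omega_m)\cap[H^s(\Omega_m)]^k$. The core of the argument is to rewrite this relation using the data alone. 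Applying Green's formula \eqref{eq.Be} with $v=h$, $u=u_e$ and invoking embedding \eqref{eq.kernels.strong.A}, which forces $A^{(e)}h=0$ and hence annihilates the bulk term $(A^{(e)}h)^*A^{(e)}u_e$, gives $\int_{\Omega_m}h^*\Delta^{(e)}u_e\,dy=-\int_{\partial\Omega_m}h^*B^{(e)}_1u_e\,d\sigma=-\beta_e\int_{\partial\Omega_m}h^*B^{(b)}_1u_b\,d\sigma$, the last step by \eqref{eq10.gen}. Substituting this together with $u_1=\beta_iB^{(b)}_1u_b$ into \eqref{eq.Green.M.Neumann.gen} collapses the compatibility condition to $(\alpha_e\beta_e+\alpha_i\beta_i)\int_{\partial\Omega_m}h^*B^{(b)}_1u_b\,d\sigma=0$ for all admissible $h$, the relative sign of the two contributions being fixed by the orientation of $\nu_i$ and $\nu_e$ on $\partial\Omega_m$.

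It remains to transfer the surface integral over $\partial\Omega_m$ to the prescribed data on $\Omega_b$. Here embedding \eqref{eq.kernels.strong.B} is decisive: every $h\in S_{A^{(i)}}(\Omega_m)\cap[H^s(\Omega_m)]^k$ continues (uniquely, by analyticity of $A^{(i)}$-solutions) to an element of $S_{A^{(i)}}(\Omega)\cap[H^s(\Omega)]^k$ whose restriction to $\Omega_b$ lies in $S_{A^{(b)}}(\Omega_b)\cap[H^s(\Omega_b)]^k$; in particular $A^{(b)}h=0$ in $\Omega_b$ and the trace of $h$ on $\partial\Omega_m$ is the same whether computed from $\Omega_m$ or from $\Omega_b$. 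Feeding $v=h$, $u=u_b$ into Green's formula \eqref{eq.Bb} and using $A^{(b)}h=0$ and $\Delta^{(b)}u_b=f$ yields $\int_{\partial\Omega_b}h^*B^{(b)}_1u_b\,d\sigma=-\int_{\Omega_b}h^*f\,dy$; splitting $\partial\Omega_b=\partial\Omega\cup\partial\Omega_m$ and inserting \eqref{eq12.gen} on $\partial\Omega$ then expresses $\int_{\partial\Omega_m}h^*B^{(b)}_1u_b\,d\sigma$ as $-\bigl(\int_{\Omega_b}h^*f\,dy+\int_{\partial\Omega_b}h^*f_1\,d\sigma\bigr)$. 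Combining this with the collapsed condition of the previous paragraph gives exactly \eqref{eq.trans}. Since every identity in the chain is reversible, the existence of $u_i$ (equivalently, of the pair $u_e$, $u_i$) is equivalent to \eqref{eq.trans}, which proves both implications simultaneously.

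The step I expect to be the main obstacle is this last transfer. One must keep scrupulous track of the orientation of $\partial\Omega_m$ as a face of both $\Omega_m$ (outward normal $\nu_i$) and $\Omega_b$ (outward normal $\nu_e=-\nu_i$), so that the single operator $B^{(b)}_1$ occurring in \eqref{eq.Bb}, \eqref{eq10.gen} and \eqref{eq11.gen} is assigned a consistent sign and the coefficient $\alpha_e\beta_e+\alpha_i\beta_i$ comes out correctly. Equally essential is the verification that the test function $h$ plays a double role --- as an $A^{(i)}$-solution on $\Omega_m$ supplying \eqref{eq.Green.M.Neumann.gen} and as an $A^{(b)}$-solution on $\Omega_b$ supplying \eqref{eq.Bb} --- with one and the same boundary trace on $\partial\Omega_m$; this matching is precisely what hypothesis \eqref{eq.kernels.strong.B} is designed to guarantee, and without it the two orthogonality conditions could not be identified.
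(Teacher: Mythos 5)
Your proposal is correct and follows essentially the same route as the paper: construct $u_e$ from the surjectivity of the Dirichlet-system trace map $(I_k,B^{(e)}_1)$, reduce the search for $u_i$ to the Neumann problem of Theorem \ref{t.Neumann.M.gen}, and convert its orthogonality condition into \eqref{eq.trans} via the Green formulas \eqref{eq.Be}, \eqref{eq.Bb} together with the embeddings \eqref{eq.kernels.strong.A}, \eqref{eq.kernels.strong.B}. Your two-step organization (collapse on $\Omega_m$, then transfer to $\Omega_b$) is a cleaner presentation of the paper's single chain of equalities, and it handles the case $\alpha_e=0$ uniformly where the paper treats it separately.
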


\begin{proof} Indeed, as $u_b\in [H^s (\Omega_b)]^k$  we see that 
$$
u_b \in  [H^{s-1/2} (\partial \Omega_b) ]^k, 
\, B^{(b)}_1 u_b \in  [H^{s-3/2} (\partial \Omega_b) ]^k.
$$
Since $(I_k, B^{(e)} _1) $ is a Dirichlet pair, applying \cite[Lemma 5.1.1]{Roit96} we may 
find a vector function $u_e \in [H^s (\Omega_m)]^k$ satisfying \eqref{eq9.gen}, 
\eqref{eq10.gen}. Of course, such a vector function is not unique and there are several 
ways to construct it.   For example, one may take $u_e$ as 
the unique solution $u\in [H^s (\Omega_m)]^k$ to Dirichlet problem
\begin{equation} \label{eq.Dirichlet.Laplacian.Q.gen}
\left\{ \begin{array}{lll}
Q u =g & \rm{in} & \Omega_m,\\
 u = u_b & \rm{on} & \partial \Omega_m,\\
 B^{(e)}_1 u	=  
	\beta_e B^{(b)}_1	u_b  &  \rm{on} &  \partial \Omega_m   \\
\end{array}
\right.
\end{equation}
with an arbitrary function $g\in [H^{s-4} (\Omega_m)]^k$ and an arbitrary 
strongly elliptic formally non-negative operator $Q$ of the fourth order and with real 
analytic coefficients in a neighbourhood of $\overline \Omega_m$. 
Indeed, under these assumptions 
the Dirichlet problem \eqref{eq.Dirichlet.Laplacian.Q.gen} admits one and only 
one solution in $[H^{s} (\Omega_m)]^k$, 
 for instance, \cite{Mikh}, \cite[Ch. 5]{Roit96} or elsewhere.
For instance, one may take $Q=(\Delta^{(e)}) ^2$ because
 $\Delta^{(e)}=(\Delta^{(e)})^*$ and hence the operator 
$$
(\Delta^{(e)})^2 = (\Delta^{(e)})^* \Delta^{(e)}
$$ 
is strongly elliptic formally non-negative and of fourth order; in particular, 
the linear space  $S_Q (\Omega_m )\cap [H^{2}_0 (\Omega_m)]^k$ is trivial. 

If $\alpha_e\ne 0$ then, integrating by parts with the use of 
\eqref{eq.Be}, \eqref{eq.Bb}, \eqref{eq8.gen}, \eqref{eq10.gen}, \eqref{eq12.gen}, 
\eqref{eq.kernels.strong.A}, \eqref{eq.kernels.strong.B}, we obtain
\begin{equation} \label{eq.chain.existence}
-\frac{\alpha_e}{\alpha_i} 
\int_{\Omega_m} h^*(y)\Delta^{(e)} u_e (y) dy = 
\frac{\alpha_e}{\alpha_i} \int_{\partial \Omega_m}  h^* B^{(e)}_1  u_e d\sigma  
-\frac{\alpha_e}{\alpha_i} \int_{\Omega_m} (A^{(e)} h )^* A^{(e)} u_e (y) dy 
=
\end{equation}

$$ 
(\beta_e +\frac{\beta_i-\beta_i}\alpha_e/\alpha_i ) 
\frac{\alpha_e}{\alpha_i} \int_{\partial \Omega_m}  h^* B^{(b)}_1 u_b d\sigma 
-(\beta_e \frac{\alpha_e}{\alpha_i} +\beta_i )\int_{\partial \Omega} h^*\Big( 
B^{(b)} _1u _b - B^{(b)} _1u _b \Big)d\sigma 
=
$$
$$
-(\beta_e \alpha_e/\alpha_i +\beta_i) \Big(\int_{\Omega_b} (A^{(b)}h)^* A^{(b)} u_b 
 dy 
- \int_{\Omega_b} h^* \Delta^{(b)} u_b  
dy\Big)   
$$
$$
-\beta_i \int_{\partial \Omega _m} h^*
B^{(b)} _1u _b  d\sigma + 
(\beta _e \alpha_e/\alpha_i +\beta_i)\int_{\partial \Omega} h^* 
B^{(b)} _1u _b d\sigma =
$$
$$
-\beta_i \int_{\partial \Omega _m} h^*
B^{(b)} _1u _b  d\sigma +  \frac{\alpha_e\beta_e   + \alpha_i\beta_i}{\alpha_i}
\Big(\int_{\Omega_b} h^* f  dy  + \int_{\partial \Omega} h^*
f_1  d\sigma \Big) 
$$

for all $h \in S_{A^{(i)}} (\Omega_m) \cap [H^s (\Omega_m)]^k$. 

If $\alpha_e= 0$ then, similarly, 
\begin{equation} \label{eq.chain.existence.2}
\beta_i 
\int_{\partial \Omega_m} h^*
B^{(b)} _1u _b  d\sigma = -\beta_i \int_{\partial \Omega_b} h^*
B^{(b)} _1u _b  d\sigma + 
\beta_i \int_{\partial \Omega} h^*
B^{(b)} _1u _b  d\sigma = 
\end{equation}
\begin{equation*}
\beta_i \Big(\int_{\partial \Omega} h^*
f_1  d\sigma +  \int_{\Omega_b} h^* f  dy \Big)
\end{equation*}
for all $h \in S_{A^{(i)}} (\Omega_m) \cap [H^s (\Omega_m)]^k$. 

In any case, \eqref{eq.trans}, \eqref{eq.chain.existence}, 
\eqref{eq.chain.existence.2}  and 
Theorem \ref{t.Neumann.M.gen} yield the existence of a vector function 
$u_i \in [H^s (\Omega_m)]^k$ satisfying 
\begin{equation} \label{eq.Neumann.M.i.gen}
\left\{ \begin{array}{lll}
\Delta^{(i)} u_i = (-\alpha_e/\alpha_i)  \Delta^{(e)} u_e & {\rm in} & \Omega_m,\\
B^{(i)}_1u_i = \beta_i B^{(b)} _1u _b  & \rm{ on } & \partial \Omega_m.\\
\end{array}
\right.
\end{equation}
More precisely, Theorem \ref{t.Neumann.M.gen} states that $u_i$ is given by 
\begin{equation} \label{eq.sol.ui.gen}
u_i = {\mathcal N}^{(i)} ((-\alpha_e/\alpha_i) \Delta^{(e)} u_e,\beta_i 
B^{(b)}_1 u_b) +h_0
\end{equation}
with an arbitrary element $h_0 \in S_{A^{(i)}} (\Omega_m) \cap [H^s (\Omega_m)]^k$. 

Again, if calibration assumption \eqref{eq.calibration.gen} holds for a pair $u_i$, $u_e$ 
 then 
$$
0=\int_{\partial \Omega_m} h^*\Big( h_0+  {\mathcal N}^{(i)} ((-\alpha_e/\alpha_i)  
\Delta_e u_e,\beta_i B^{(b)}_1 u_b)  +c_0 u_e\Big) d\sigma (y) =
$$
$$
\int_{\partial \Omega_m} (\Pi_0 h)^*  \Big( h_0  +c_0 u_e\Big) d\sigma =
\int_{\partial \Omega_m} h^*  \Big( h_0  +c_0 \Pi_0 u_b\Big) d\sigma 
$$
for any $h \in S_{A^{(i)}} (\Omega_m) \cap [H^s (\Omega_m)]^k$ 
because of normalising condition \eqref{eq.Neumann.normilize}. 
Thus, the element $h_0$ in \eqref{eq.sol.ui.gen} may be uniquely defined by 
\begin{equation}  \label{eq.const.gen}
h_0  = -c_0 \Pi_0 u_b .
\end{equation}

Finally, chain of equalities \eqref{eq.chain.existence}
tell us that condition \eqref{eq.trans} is necessary 
for the solvability of problem \eqref{eq7.gen}, \eqref{eq9.gen}, \eqref{eq10.gen}, 
\eqref{eq11.gen}, that was to be proved. 
\end{proof}

As we have mentioned above, in the very particular case where, additionally, 
the operators $A^{(e)}$, $A^{(i)}$, $A^{(b)}$ are proportional, 
similar theorems were proved in \cite{Shef}.

\begin{remark} 
Assumptions \eqref{eq.kernels.weak}, \eqref{eq.kernels.strong.A}, \eqref{eq.kernels.strong.B}  
are fulfilled in the following reasonable situation:
\begin{equation} \label{eq.holonom}
A^{(i)} = M^{(i)} {\mathcal A}, \, A^{(e)} =  M^{(e)} {\mathcal A}, \, A^{(b)} = M^{(b)} {\mathcal A}, 
\end{equation}
where $M^{(i)}$,  $M^{(e)}$,  $M^{(b)}$ are $(l\times l)$-matrices with real analytic 
coefficients over $U_m$ and $U_b$ satisfying \eqref{eq.M.pos} and 
${\mathcal A}$ is an $(l\times k)$ {\it holonomic}  differential 
operator, i.e. it is an operator with constant coefficients such that for any domain $D \subset {\mathbb R}^n$ we have
$$
S_{\mathcal A} (D) = S_{\mathcal A} ({\mathbb R}^n) , \, \mathrm{dim} (S_{\mathcal A} 
({\mathbb R}^n)  <\infty,
$$
and 
there is a positive constant $c_{\mathcal A}$ such that  
$$
\|u\|_{H^1 (\Omega)}\leq c \|{\mathcal A} u \|_{L^2 (\Omega)} 
\mbox{ for all } u \in (S_{{\mathcal A}}(\Omega)\cap [H^1 (\Omega)]^k)^\bot
$$
for any bounded domain $\Omega$ with smooth boundary. 
 
For instance, in the models of the electrocardiography we have 
${\mathcal A} = \nabla$, 
$$
S_{\mathcal A} (D) = 
S_\nabla (D) = S_\nabla ({\mathbb R}^n) = S_{\mathcal A} ({\mathbb R}^n)  ={\mathbb R}, \, 
\mathrm{dim} (S_{\mathcal A} ({\mathbb R}^n)  =1, 
$$
for any domain $D\subset {\mathbb R}^n$
and then
$$
S_{A^{(i)}} (\Omega_m) = S_{A^{(i)}} (\Omega)
= S_{A^{(e)}} (\Omega_m) = S_{A^{(b)}} (\Omega_b) ={\mathbb R},
$$
$$
S_{A^{(i)}} (\Omega_m) \subset S_{\Delta^{(e)}} (\Omega_m),
$$
i.e. \eqref{eq.kernels.weak}, \eqref{eq.kernels.strong.A}, \eqref{eq.kernels.strong.B} are 
fulfilled in this case, too.
\end{remark}

Of course, in some particular situations we can 
say much more. 

\begin{example} \label{ex.prop.gen}
Consider the situation where  the case $A^{(e)}$, $A^{(i)}$ and   $A^{(b)}$ 
satisfy \eqref{eq.holonom}  and the operators $A^{(e)}$, $A^{(i)}$ are proportional  
(cf. \cite{2K}, \cite{Shef}). Then, in particular, 
the  pair $(\Delta^{(i)},  \sigma ^* (A^{(i)})(\nu_i)  A^{(i)})$ 
satisfies  
Shapiro-Lopatinsky conditions in $\overline \Omega_m$, 
 embeddings \eqref{eq.kernels.weak}, \eqref{eq.kernels.strong.A}, \eqref{eq.kernels.strong.B}
hold true and 
\begin{equation} \label{eq.prop.gen}
\Delta^{(e)} = \gamma \Delta^{(i)} 
\mbox{ with some } \gamma >0. 
\end{equation}

Then, with any  pair $(\Delta^{(i)}, B^{(i)}_1)$ 
satisfying Shapiro-Lopatinsky conditions and \eqref{eq.Bi}, we have 
$$
{\mathcal N}^{(i)} (\Delta^{(e)} u,0) = \gamma {\mathcal N}^{(i)} (\Delta^{(i)} 
u,0) =  \gamma u
$$
for each $u \in [H^s (\Omega_m) \cap H^2_0 (\Omega_m)]^k$. Thus,  
according to Theorem \ref{t.Neumann.M.gen}, 
\begin{equation} \label{eq.nullspace.inside.prop.gen}
	\left\{ 
	\begin{array}{ccccc}
	 u_e  & = & u & {\rm in} & \Omega_m,\\
	 u_i  & = & (-\alpha_e/\alpha_i) \gamma u + h_0 & {\rm in} & \Omega_m,\\
\end{array}
\right.
\end{equation}
for each pair $u_i$, $u_e$ from 
the null-space of Problem \ref{pr.inverse.inside} where 
$h_0$ is an arbitrary element of $S_{A^{(i)}} 
(\Omega_m) \cap [H^s (\Omega_m)]^k$ and $u $ is an 
arbitrary function 
from $ [H^s (\Omega_m) \cap H^2_0 (\Omega_m)]^k$.  Again, if 
calibration assumption \eqref{eq.calibration.gen} holds for a pair $u_i$, $u_e$ from 
the null-space then the element $h_0$ in \eqref{eq.nullspace.inside.prop.gen} equals to zero.

As for the Existence Theorem, in this case 
$$
{\mathcal N}^{(i)} ((-\alpha_e/\alpha_i) \Delta^{(e)} u_e,\beta_i B^{(b)}_1 u_b) =
$$
$$ 
 {\mathcal N}^{(i)} ((-\alpha_e/\alpha_i) \gamma\Delta^{(i)} u_e, 
\Big((-\alpha_e/\alpha_i)\gamma \beta _e+ 
\beta_i- (-\alpha_e/\alpha_i)\gamma\beta_e)  B^{(b)}_1 u_b)
= 
$$
$$
-(\alpha_e/\alpha_i) \gamma u_e +(\beta_i + 
(\alpha_e/\alpha_i)\gamma \beta_e) 
{\mathcal N}^{(i)} (0, B^{(b)}_1 u_b).
$$
Thus, formula \eqref{eq.sol.ui.gen} has the form
\begin{equation} \label{eq.sol.ui.prop.gen}
u_i =-(\alpha_e/\alpha_i) \gamma u_e +(\beta_i + 
(\alpha_e/\alpha_i) \gamma \beta_e) {\mathcal N}^{(i)} (0, B^{(b)} u_b) +h_0
\end{equation}
where $h_0$ is an arbitrary element of $S_{A^{(i)}} (\Omega_m) \cap [H^s (\Omega_m)]^k$, 
if \eqref{eq.trans} is fulfilled. 
Again, if calibration assumption \eqref{eq.calibration.gen} holds for the pair $u_i$, $u_e$  
then the element $h_0$ in the last formula may be uniquely defined by \eqref{eq.const.gen}.
\end{example}

\begin{remark} 
\label{r.alphai}
We note that for $\alpha_i\ne 0$, 
similarly to the results of \S \ref{s.bidomain.steady},  
Theorem \ref{t.bidomain.null.gen}  means 
that Problem \ref{pr.inverse.inside.gen} has too many degrees of freedom. 
Actually, for $\alpha_i = 0$ the problem is even more unbalanced. Indeed, 
using  \eqref{eq8.gen}, \eqref{eq9.gen}, \eqref{eq10.gen} and the Uniqueness Theorem 
\cite[Theorem 2.8]{ShTaLMS} for the Cauchy problem, it is easy to check 
that in this case 
the null-space of Problem \ref{pr.inverse.inside.gen} consists of all the triples $u_i$, $u_e$, 
$u_b$ from $[H^s (\Omega_m)]^k\times [H^s (\Omega_m)]^k \times [H^s (\Omega_b)]^k$ satisfying the following conditions: 
\begin{equation} \label{eq.nullspace.inside.gen.alphai}
	\left\{ 
	\begin{array}{ccccc}
	u_b  & = & 0 & {\rm in} & \Omega_b,\\ 
	u_e  & = & 0 & {\rm in} & \Omega_m,\\
	 u_i  & = & u & {\rm in} & \Omega_m,\\
\end{array}
\right.
\end{equation}
where $u \in [H^s (\Omega_m)]^k$ is an arbitrary 
function satisfying 
$$
B^{(i)}_1 u= 0   \mbox{ on }  \partial \Omega_m.
$$
On the other hand, as $\alpha_e \ne 0$, 
the existence of a solution to 
Problem \ref{pr.inverse.inside.gen} depends upon the solvability of the 
following Cauchy problem  
\begin{equation} \label{eq.exist.gen.alphai}
	\left\{ 
	\begin{array}{ccccc}
	\Delta^{(e)}u_e  & = & 0 & {\rm in} & \Omega_m,\\ 
	u_e  & = & u_b & {\rm on} & \partial \Omega_m,\\
	 u_e  & = & \beta _e  B^{(e)}_1 u_b & {\rm on} & \partial \Omega_m,\\
\end{array}
\right.
\end{equation}
corresponding to \eqref{eq8.gen}, \eqref{eq9.gen}, \eqref{eq10.gen}. 
Since the boundary data are given on all the surface $\partial \Omega_m$, 
this Cauchy problem is normally solvable in the declared spaces, see  
\cite[Theorems 2.8 and 5.2]{ShTaLMS}. However it 
has a large co-kernel: it is solvable if and only if 
$$
\int_{\partial \Omega_m} 
(B^{(e)}_1 (y,\partial _y) \varphi 
_e (x,y))^* u_b(y) 
-  \beta_e (\varphi 
_e (x,y))^*B^{(b)}_1 u_b (y) \Big) d\sigma (y) =0 
$$
for all $ x \in \Omega$. In particular, this implies that the data 
$f_0 \in [H^{s-1/2} (\partial \Omega)]^k$, 
$f_1 \in [H^{s-3/2} (\partial \Omega)]^k$, $f 
\in  [H^{s-2} (\Omega_b)]^k$, derfining the function $u_b$, 
can not be arbitrary. For this reason, we will concentrate 
our efforts on the case where $\alpha_i\ne 0$. 
\end{remark}

As we have noted in Remark \ref{r.alphai},  
the null-space of Problem \ref{pr.inverse.inside.gen} is too large.  
Practically, this means that 
 at least one  equation related to the unknown 
vector functions  in $\Omega_m$ is still missing. On the other hand, the proof of  
 Theorem \ref{t.bidomain.exists.gen} suggests us to supplement 
Problem \ref{pr.inverse.inside.gen} with 
a fourth order strongly elliptic equation
\begin{equation} \label{eq.add}
Q u_e =g \mbox{ in } \Omega_m 
\end{equation}
with a given function $g$ in $\Omega_m$.

\begin{corollary} \label{c.bidomain.exists.gen}
Let $s\geq 2$, $\alpha_i \ne 0$, \eqref{eq.Bi}, \eqref{eq.Be}, \eqref{eq.Bb} be fulfilled and 
the  pair $(\Delta^{(i)},  B^{(i)}_1)$ satisfy 
Shapiro-Lopatinsky conditions in $\overline \Omega_m$,  
let  embeddings \eqref{eq.kernels.strong.A}, \eqref{eq.kernels.strong.B}
hold true and the triple 
$$f \in [H^{s-2} (\Omega_b)]^k, \, 
f_0 \in [H^{s-1/2} (\partial \Omega_b)]^k, \,  
f_1 \in [H^{s-3/2} (\partial \Omega_b)]^k
$$  
admit
the solution $u_b\in [H^s (\Omega_b)]^k$  to \eqref{eq8.gen}, \eqref{eq12.gen}, 
\eqref{eq6.gen} and satisfy \eqref{eq.trans}. 
If $Q$ is a fourth order strongly elliptic operator over $\overline \Omega_m$, then,
given vector $g \in [H^{s-4} (\Omega_m)]^k$, problem   
\eqref{eq7.gen}, \eqref{eq9.gen}, \eqref{eq10.gen}, \eqref{eq11.gen}, \eqref{eq.add} has the  
Fredholm property. If 
$Q$ is a fourth order formally non-negative strongly elliptic operator 
with real analytic coefficients over $\overline \Omega_m$, then,
given vector $g \in [H^{s-4} (\Omega_m)]^k$, problem   
\eqref{eq7.gen}, \eqref{eq9.gen}, \eqref{eq10.gen}, \eqref{eq11.gen}, 
\eqref{eq.calibration.gen}, \eqref{eq.add} has 
one and only one solution $(u_i,u_e) \in [H^s (\Omega_m)]^k \times [H^s (\Omega_m)]^k $.
\end{corollary}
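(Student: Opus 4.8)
The plan is to exploit the block-triangular structure of the supplemented problem: equations \eqref{eq.add}, \eqref{eq9.gen}, \eqref{eq10.gen} involve the single unknown $u_e$ and constitute precisely the Dirichlet problem \eqref{eq.Dirichlet.Laplacian.Q.gen}, whereas \eqref{eq7.gen}, \eqref{eq11.gen}, once $u_e$ has been found, constitute the Neumann problem \eqref{eq.Neumann.M.i.gen} for $u_i$ (recall that $u_b$ is already fixed by hypothesis). Thus I would first determine $u_e$, then $u_i$, and assemble the Fredholm property (respectively the unique solvability) of the whole system from that of the two constituent problems.

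For the first assertion I would argue as follows. Since $Q$ is a fourth order strongly elliptic operator over $\overline{\Omega}_m$ and $(I_k, B^{(e)}_1)$ is a Dirichlet pair on $\partial \Omega_m$, the Dirichlet problem \eqref{eq.Dirichlet.Laplacian.Q.gen}, with right-hand side $g \in [H^{s-4} (\Omega_m)]^k$ and boundary data induced by $u_b$, has the Fredholm property in $[H^s (\Omega_m)]^k$ (see \cite{Mikh}, \cite[Ch.~5]{Roit96}); hence the passage to $u_e$ is governed by a Fredholm operator. With $u_e$ regarded as data, Theorem \ref{t.Neumann.M.gen} shows that the Neumann problem \eqref{eq.Neumann.M.i.gen} for $u_i$ is again Fredholm, its null-space being the finite dimensional space $S_{A^{(i)}} (\Omega_m) \cap [H^s (\Omega_m)]^k$ and its solvability governed by the finitely many orthogonality relations \eqref{eq.Green.M.Neumann.gen}. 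Composing the two Fredholm maps through the triangular dependence $u_e \mapsto u_i$ then yields the Fredholm property of the full problem \eqref{eq7.gen}, \eqref{eq9.gen}, \eqref{eq10.gen}, \eqref{eq11.gen}, \eqref{eq.add}.

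For the second assertion I would use that a fourth order formally non-negative strongly elliptic operator $Q$ with real analytic coefficients renders the Dirichlet problem \eqref{eq.Dirichlet.Laplacian.Q.gen} uniquely solvable in $[H^s (\Omega_m)]^k$, the homogeneous problem having only the trivial solution because $S_Q (\Omega_m) \cap [H^2_0 (\Omega_m)]^k = \{0\}$, exactly as noted in the proof of Theorem \ref{t.bidomain.exists.gen}. This pins down $u_e$. The key point is then that the solvability condition \eqref{eq.Green.M.Neumann.gen} for \eqref{eq.Neumann.M.i.gen} holds: since the chains of identities \eqref{eq.chain.existence}, \eqref{eq.chain.existence.2} use only that $u_e$ satisfies \eqref{eq9.gen}, \eqref{eq10.gen} — which the Dirichlet solution does — the assumed transmission relation \eqref{eq.trans} guarantees \eqref{eq.Green.M.Neumann.gen}, so $u_i$ exists and is unique up to an element $h_0 \in S_{A^{(i)}} (\Omega_m) \cap [H^s (\Omega_m)]^k$ by Theorem \ref{t.Neumann.M.gen}. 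Finally, the calibration assumption \eqref{eq.calibration.gen} fixes $h_0 = -c_0 \Pi_0 u_b$ uniquely via \eqref{eq.const.gen}, precisely as in the proofs of Theorems \ref{t.bidomain.null.gen} and \ref{t.bidomain.exists.gen}, giving existence and uniqueness of the pair $(u_i, u_e)$.

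The hard part will be ensuring that the solvability obstruction extracted from \eqref{eq.trans} is exactly the one attached to the now-prescribed Dirichlet solution $u_e$, rather than to the freely chosen $u_e$ of Theorem \ref{t.bidomain.exists.gen}; once it is observed that \eqref{eq.chain.existence}, \eqref{eq.chain.existence.2} depend on $u_e$ only through \eqref{eq9.gen}, \eqref{eq10.gen}, this matching is automatic, and the remaining work is the routine bookkeeping of Fredholm kernels and cokernels when composing the Dirichlet and Neumann solution operators.
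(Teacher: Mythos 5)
Your proposal is correct and follows essentially the same route as the paper: decompose the supplemented system into the Dirichlet problem \eqref{eq9.gen}, \eqref{eq10.gen}, \eqref{eq.add} for $u_e$ and the Neumann problem \eqref{eq7.gen}, \eqref{eq11.gen} for $u_i$, each of which is Fredholm, and in the formally non-negative case use unique solvability of the Dirichlet problem together with \eqref{eq.trans} and the calibration condition \eqref{eq.calibration.gen} to pin down $(u_i,u_e)$ uniquely. Your explicit observation that the chains \eqref{eq.chain.existence}, \eqref{eq.chain.existence.2} use $u_e$ only through \eqref{eq9.gen}, \eqref{eq10.gen} — so that \eqref{eq.trans} yields the Neumann solvability condition for the \emph{prescribed} Dirichlet solution — is a detail the paper leaves implicit but is exactly the right point to check.
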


\begin{proof} Recall that a problem related to an operator 
equation 
$$
R u =f
$$
with a linear bounded operator $R: X_1 \to X_2$ in Banach spaces $X_1, X_2$ has the Fredholm 
property, if the kernel ${\rm ker}(R)$ of the operator $R$ and 
the cokernel ${\rm coker}(R)$ (i.e. 
the kernel ${\rm ker}(R^*)$ of its adjoint operator $R^*: X_2^* \to X_1^*$)
are finite-dimensional vector spaces and the range of the operator $R$ is closed in $X_2$.   

Under the hypothesis of this corollary both Dirichlet problem 
 \eqref{eq9.gen}, \eqref{eq10.gen}, \eqref{eq.add}, see, for instance, 
\cite{Roit96}  and 
Neumann problem \eqref{eq7.gen}, \eqref{eq11.gen}, see, 
for instance, \cite{Simanca1987}, have Fredholm property 
in the relevant Sobolev spaces. Hence the first part of the 
statement of the corollary is proved. 

If we additionally assume that  
$Q$ is a fourth order formally non-negative strongly elliptic operator 
with real analytic coefficients over $\overline \Omega_m$, then,
given vector $g \in [H^{s-4} (\Omega_m)]^k$, problem   
Dirichlet problem 
 \eqref{eq9.gen}, \eqref{eq10.gen}, \eqref{eq.add} has one and only 
one solution $u_e \in  [H^{s} (\Omega_m)]^k$.  Moreover, as we have seen in the proof 
of Theorem \ref{t.bidomain.exists.gen}, under calibration condition 
\eqref{eq.calibration.gen}, 
Neumann problem \eqref{eq7.gen}, \eqref{eq11.gen} 
is uniquely solvable in the space $[H^{s} (\Omega_m)]^k$, too. Thus, problem 
\eqref{eq7.gen}, \eqref{eq9.gen}, \eqref{eq10.gen}, \eqref{eq11.gen}, 
\eqref{eq.calibration.gen}, \eqref{eq.add} has 
one and only one solution $(u_i,u_e) \in [H^{s} (\Omega_m)]^k
\times  [H^{s} (\Omega_m)]^k$.

This finishes the proof of the corollary.
\end{proof}

We emphasize that the Fredholm property for a problem is not always the desirable result 
in applications because of the possible lack of the uniqueness and possible absence of 
solutions. As the index (the difference between the 
dimensions of its  kernel and co-kernel) of the Dirichlet problem in the standard setting 
equals to zero, the lack of uniqueness immediately implies some 
necessary solvability conditions applied to the given vector $g$ in the Corollary 
\ref{c.bidomain.exists.gen}.

\begin{example} \label{ex.cardio}
First of all, we note that Problem \ref{pr.inverse.inside} is perfectly fit 
for the new more general model with $\alpha_i =1$, $\alpha_e =1$, 
$\beta_e=-1$, $\beta_i=0$,  $f=0$, $f_1=0$ 
and 
$$
A^{(i)}=M^{(i)} \nabla ,\, A^{(e)}=M^{(e)} \nabla \, A^{(b)}=M^{(b)} \nabla 
$$ 
and $M^{(i)}$,  $M^{(e)}$,  $M^{(b)}$ are $(l\times l)$-matrices with real analytic 
coefficients over $U_m$ and $U_b$ satisfying \eqref{eq.M.pos}.
Comparing with the results of \S \ref{s.bidomain.steady}, we have 
$$
M_i =(M^{(i)} (x))^2, \, M_i =(M^{(i)} (x))^2, \, M_i =(M^{(i)} (x))^2.
$$
i.e. we may consider matrices with real analytic entries. 

Unfortunately, we do not know any published 
modifications of the standard bidomain model of 
the electrocardiography involving higher order strongly elliptic equation
 \eqref{eq.add}. The following example has been reported to us by 
Vitaly Kalinin\footnote{V. Kalinin, MD, PhD, EP Solutions SA, 
Avenue des Sciences 13, 1400 Yverdon-les-Bains, Switzerland, 
e-mail: contact@ep-solutions.ch}.

Namely, consider Problem \ref{pr.inverse.inside} 
in the situation where assumption  \eqref{eq.scalar} is fulfilled. 
Next we assume that the function $f_0$ in \eqref{eq6} does not depend on the time 
variable $t$, 
calibration condition \eqref{eq.calibration} is fulfilled and  
that the following electrodynamic relation holds true for the steady current $u_e$:
\begin{equation} \label{eq.current.2}
\Delta u_e  = - \frac{q_e}{\varepsilon \varepsilon_0}.
\end{equation}

Hence, substituting \eqref{eq.current.2} 
into \eqref{eq.balance.current.i} and \eqref{eq.balance.current.e} 
we obtain formulas that can be useful if we need to transform 
evolutionary equations to stationary ones:
\begin{equation} \label{eq.current.4e}
\frac{\partial \Delta u_e}{\partial t}  =
- \frac{1}{\varepsilon \varepsilon_0} \Big( \sigma _e \Delta u_e +\chi I_{\rm  ion} \Big). 
\end{equation}
Now, taking in account \eqref{eq7}, cable equation \eqref{eq.balance.cable.0} and 
\eqref{eq.sol.ui.prop} 
we obtain the following  equation in the sense of distributions in 
$ \Omega_m \times (0,T)$:
\begin{equation} \label{eq.balance.cable.00.e}
\frac{\sigma_i \sigma_e \varepsilon \varepsilon_0}{\sigma_e+\sigma_i} 
\Delta^2 u _e =  - 
\chi C_m  \sigma_e \Delta u_e  -  C_m  \chi I_{\rm  ion} - 
\frac{\chi \sigma_i \sigma_e \varepsilon \varepsilon_0}{\sigma_e+\sigma_i} 
\Delta I_{\rm  ion}.
\end{equation}
If we are to stay within the framework of linear theory we may 
assume that the ionic current is given by 
\begin{equation} \label{eq.Iion.lin.steady}
I_{\rm  ion} (v) = \sum_{j=1}^n a_j \partial_j v + a_0 v + b  
\end{equation} 
with some function  $b \in L^2 (\Omega)$, and some  constants $a_j$, $0\leq j \leq n$.  
Then, as the operator $\Delta^2$ is strongly elliptic, using \eqref{eq.sol.ui.prop}  and 
\eqref{eq.balance.cable.00.e} we arrive 
at the fourth order strongly elliptic equation
\begin{equation} \label{eq.add.prop}
\frac{\sigma_i \sigma_e \varepsilon \varepsilon_0}{\sigma_e+\sigma_i} 
\Delta^2 u _e + \chi C_m \sigma_e \Delta u_e +
	\Big( \frac{C_m(\sigma_e+\sigma_i)}{\sigma_i } 
\Big) \chi I_{\rm  ion}(u_e)+
\end{equation}
$$
 \chi \sigma_e \varepsilon \varepsilon_0 \Delta I_{\rm  ion}(u_e) 
= - \frac{\sigma_e}{\sigma_i}\chi C_m  I_{\rm  ion} 
({\mathcal N}_i (0, \nu_i \cdot (M_b \nabla u _b))  .
$$
There is little hope that Dirichlet problem  
\eqref{eq.add.prop}, \eqref{eq9}, \eqref{eq10} is uniquely solvable, 
taking in account that  the coefficient $\varepsilon \varepsilon_0 $ is practically 
very small.  Hence we may grant the Fredholm property only 
for problem \eqref{eq7}, \eqref{eq9}, \eqref{eq10}, \eqref{eq11}, \eqref{eq.add.prop} 
even under calibration assumption \eqref{eq.calibration}.

Also we note that in the practical models of the electrocardiography the term 
$I_{\rm  ion} (v,x,t)$ is usually non-linear with respect to $v$. 
For general non-linear Fredholm problems 
one may provide under reasonable assumptions a discrete set of solutions only, 
see \cite{Sm65} for the second order elliptic operators in H\"older spaces.
Thus one should specify the type of the non-linearities under the consideration.
For example, in the models of the cardiology the non-linear term is often taken as 
a polynomial of second or third order with respect to $v$, see, for instance, 
\cite{AP96}, \cite{2}, though these choices do not fully correspond to the 
real processes in the myocardium.  
\end{example}

\begin{example} \label{ex.elastic}
Consider the following $((n^2+1) \times n)$-matrix  differential operator
\begin{equation} \label{eq.A.el}
{\mathscr A} =
\left( \begin{array}{lllll}
\nabla & 0 & 0 &\dots & 0\\
0 & \nabla & 0 & \dots & 0\\
\dots & \dots & \dots & \dots & \dots\\
0 & 0 & 0 &\dots & \nabla\\
\partial_1 & \partial_2 & \partial_3 &\dots & \partial_n\\
\end{array}
\right).
\end{equation}
Its symbol 
$$
\sigma ({\mathscr A})(\zeta) = 
\left( \begin{array}{lllll}
\zeta & 0 & 0 &\dots & 0\\
0 & \zeta & 0 & \dots & 0\\
\dots & \dots & \dots & \dots & \dots\\
0 & 0 & 0 &\dots & \zeta\\
\zeta_1 & \zeta_2 & \zeta_3 &\dots &\zeta_n\\
\end{array}
\right)
$$
is injective  for any $\zeta \in {\mathbb R}^n \setminus \{0\}$ because it contains 
submatrices of the type $\zeta_j I_n$, $1\leq j \leq n$. 

Obviously, the space of its solutions $S_{{\mathscr A}} 
(D)$ coincide with ${\mathbb R}^n$ and the operator 
is holonomic. Taking a diagonal $((n^2+1) \times (n^2+1))$-matrix 
\begin{equation} \label{eq.M.el} 
{\mathscr M} (x) =\left( \begin{array}{lllll}
\mu(x) & 0 & 0 &\dots & 0\\
0 & \mu(x) & 0 & \dots & 0\\
\dots & \dots & \dots & \dots & \dots\\
0 & 0 & 0 & \mu(x) & 0\\
0 & 0  &\dots & 0 & \lambda(x)+ \mu(x)\\
\end{array}
\right)
\end{equation}
with real analytic entries $\lambda(x)$, $\mu(x)$ over a domain $X\subset {\mathbb R}^n$ 
we obtain  $(n\times n)$-differential operator 
$$
{\mathscr A}^* \, {\mathscr M} \, {\mathscr A} = - \Big({\rm div} (\mu (x) I_n) \nabla  +  
\nabla  (\lambda(x)+ \mu(x))  {\rm div}\Big).
$$
In many applications it is known as  the Lam\`e operator; it is elliptic,
 strongly elliptic and formally non-negative if 
$$
\left\{
\begin{array}{lll}
\mu (x) \geq m_0 \mbox{ for all } x \in X, \\
\lambda(x)+ 2\mu(x) \geq m_0 \mbox{ for all } x \in X, \\
\lambda(x)+ \mu(x) \geq 0 \mbox{ for all } x \in X, \\
\end{array}
\right.
$$
where $m_0$ is a positive number, because with $ x\in X$, $\zeta \in {\mathbb R}^n$,
$w \in {\mathbb C}^n$ we have:
$$
\sigma({\mathscr A}^* \, {\mathscr M} \, {\mathscr A}) (\zeta) =
 -  \Big( \mu (x) |\zeta|^2 I_n + (\lambda(x)+ \mu(x)) 
\zeta \zeta ^T \Big), 
$$
$$
\det \sigma({\mathscr A}^* \, {\mathscr M} \, {\mathscr A}) 
(\zeta) =|\zeta|^{2n} \mu ^{n-1}(x) (\lambda(x)+ 2\mu(x)), 
$$
$$
-\Re \Big(w^*\sigma({\mathscr A}^*\, {\mathscr M} 
\, {\mathscr A}) (\zeta)w) \Big) =  \Big( \mu (x) |\zeta|^2 |w|^2 + 
(\lambda(x)+ \mu(x))  
| \zeta ^T w|^2\Big) . 
$$

If the functions $\mu$ and $\lambda$ are constant then 
its bilateral fundamental solution of
convolution type is given by the 
{\it Kelvin-Somigliana matrix}  
 $\Phi (x)  = \left( \Phi_{mj}
(x) \right)$ with the components
$$
\Phi_{mj} (x) =
   \frac{1}{2 \mu (\lambda + 2 \mu)}
   \left( \delta_{mj}\, (\lambda + 3 \mu )  \varphi _n (x)  -
          (\lambda + \mu)\, x_{j}\, \frac{\partial}{\partial x_{m}} \varphi _n (x)
   \right)
	$$
where $\delta_{mj}$ is the Kronecker delta, and $\varphi _n(x)$
is the  standard fundamental solution to the Laplace operator in ${\mathbb R}^n$ 
(see, for example, \cite[Part II, \S 2, (1.7)]{Kup}).

As it is known from the linear Elasticity Theory (for $n=2$ and $n=3$), 
the system of equations
$$
{\mathscr A}^* \, {\mathscr M} \, {\mathscr A}  u = f \mbox{ in } D
$$
describes the displacement vector $u(x)$ of points $x$ of an elastic body $D$ 
under the action of the force $f(x)$; in these case $\mu $ and 
$\lambda   $ are the so-called Lam\'e constants characterizing elastic 
properties of body's material, see, for instance, 
\cite[Ch. 1, \S 11, formula (11.7)]{Kup}.

Next, the matrix ${\mathscr T} = ({\mathscr T}_{mj} (x))$ with the entries 
$$
{\mathscr T}_{mj} (x) =
     \mu\, \delta_{mj}\, \frac{\partial}{\partial \nu } +
   \lambda\, \nu _{m} (x)\, \frac{\partial}{\partial x_{j}} +
       \mu\, \nu _{j} (x)\, \frac{\partial}{\partial x_{m}}
   \, \, \, (m,j = 1,..., n), 
	$$
	is known as the boundary 
	stress operator near $\partial D$ if  $\nu _{i} (x)$
are the components of the outward unit normal vector to $\partial D$
at the point $x$. Applying Ostrogradsky-Gauss formula we see that 
\begin{equation} \label{eq.stress}
\int_{D} v^* {\mathscr T} u d\sigma = 
\int_{D} \Big(  ({\mathscr A} v)^* {\mathscr M} {\mathscr A} u   - v ^* {\mathscr A}^* 
{\mathscr M} {\mathscr A} u \Big) dx
\end{equation}
for all  $v \in [H^1 (D)]^n$, $u\in  [H^2 (D)]^n$, i.e. we may consider 
Problem \ref{pr.inverse.inside.gen} for operators 
$$
A^{(i)} = M^{(i)} {\mathscr A}, \, A^{(e)} = M^{(e)} {\mathscr A}, \, 
A ^{(b)} =  M^{(b)}  {\mathscr A}
$$ 
related to operator \eqref{eq.A.el} and square roots 
 $M^{(i)} $, $M^{(e)}$, $M^{(b)}$ of  
$(n\times n)$-matrices ${\mathscr M}^{(i)}$, 
${\mathscr M}^{(e)}$, ${\mathscr M}^{(b)}$ given by \eqref{eq.M.el} with 
Lam\'e constants $\mu^{(i)}$, $\lambda^{(i)}$, 
$\mu^{(e)}$, $\lambda^{(e)}$, $\mu^{(b)}$, $\lambda^{(b)}$, respectively, 
and boundary first order operators 
$$
B^{(i)}_1={\mathscr T}^{(i)} , \, B^{(e)}_1={\mathscr T}^{(e)} , \,  
B^{(b)}_1={\mathscr T}^{(b)} .
$$   
The problem for an elastic  composite body $\Omega=\Omega_b \cup \overline{ \Omega}_m$
then consists in the following:

1) the description of 
the displacement vector $u_b$ of the `exterior' elastic body $\Omega_b$
by the known force $f$ in $\Omega_b$, the displacement $f_0=u_b$ and  
the stress ${\mathfrak T}^{(b)}u_b=f_1$ on the surface $\partial \Omega$, see \eqref{eq8.gen}, 
\eqref{eq12.gen}, \eqref{eq6.gen};

2) the  description of the displacement vectors $u_i, u_e$ of the `interior' 
composite body $\Omega_m \subset \Omega$, where two more elastic materials are mixed in such 
a way that 

a) the displacement vectors $u_i, u_e$ inside 
 $\Omega_m$ are linked via some homogenization procedure  
 with the use of equation \eqref{eq7.gen};

b) relations  \eqref{eq9.gen}, \eqref{eq10.gen}, \eqref{eq11.gen} connect the 
displacement $u_b$ and the stress ${\mathfrak T}^{(b)}u_b$  with the displacement $u_e$, 
the stress ${\mathfrak T}^{(e)}u_e$ and the stress ${\mathfrak T}^{(i)}u_i$ 
on the surface $\partial \Omega_m$. 

The situation become sufficiently realistic if we assume $\alpha_i =\alpha_e$, $\beta_i=0$, 
$\beta_e=1$,  i.e. the loads applied 
to different materials inside $\Omega_m$ are the same, the stress 
 ${\mathfrak T}^{(i)}u_i$ equals to zero on the surface $\partial \Omega_m$ (for instance, 
because of the corresponding material never contact with the surface) and 
${\mathfrak T}^{(e)}u_e  =-{\mathfrak T}^{(b)}u_b$ on $\partial \Omega_m$.
\end{example}

\section{An evolutionary problem}
\label{s.bidiomain.t}

We recall that the primary equations 
\eqref{eq.balance.charge}, \eqref{eq.balance.current.i}, 
\eqref{eq.balance.current.e}, leading to the classical steady 
bidomain model of the electrocardiography are actually evolutionary. 
That is why, let us obtain a uniqueness theorem for a 
generalized evolutionary problem, too.
With this purpose, we introduce the suitable spaces 
for investigation of parabolic equations,  see, for instance, \cite[Ch. 1]{LadSoUr67}. 

For $T>0$ we set $\Omega_T = \Omega \times (0,T)$.
Let
 $C^{2s,s} (\Omega_T)$ be the set of all continuous functions
 $u$ on  $\Omega_T$, having on $\Omega_T$ continuous partial 
derivatives 
$\partial^j_t \partial^{\alpha}_x u$ for all multi-indexes $(\alpha,j) 
\in {\mathbb Z}_+^{n} \times {\mathbb Z}_+$, satisfying $|\alpha|+2j \leq 2s$. 
Clearly, for the cylinder domain $\Omega_T$ we have 
$\overline{\Omega_T} = \overline{\Omega} \times [0,T]$. 
Then $C^{2s,s} (\overline{\Omega_T} )$ denotes the subset 
in $C^{2s,s} (\Omega_T)$, such that for any function 
$u \in C^{2s,s} 
(\overline{\Omega_T})$ 
and any multi-index  $(\alpha,j) 
\in {\mathbb Z}_+^{n} \times {\mathbb Z}_+$, there is a function $u_{\alpha,j}$, 
continuous on $\overline{\Omega_T}$ and such that 
$\partial^j_t \partial^{\alpha}_x u = u_{\alpha,j}$ in $\Omega_T$. 

Let us denote by  $H^{2s,s} (\Omega_T)$, $s \in  {\mathbb Z}_+$, 
anisotropic (parabolic) Sobolev spaces, see, for instance, 
\cite{LadSoUr67}, i.e. the set of such measurable functions 
 $u$ on  $\Omega_T$ that the partial derivatives 
$\partial^j_t \partial^{\alpha}_x u$ 
belong to the Lebesgue space $L^{2} (\Omega_T)$ 
for all multi-indexes $(\alpha,j) 
\in {\mathbb Z}_+^{n} \times {\mathbb Z}_+$ satisfying $|\alpha|+2j \leq 2s$. 
This is a Hilbert space with the inner product
\begin{equation} \label{eq.Hs}
(u,v)_{H^{2s,s} (\Omega_T)} = \sum_{|\alpha|+2j \leq 2s}
\int_{\Omega_T} \partial^j_t \partial^{\alpha}_x v 
(x,t) \, \partial^j_t \partial^{\alpha}_x u (x,t) dx dt.
\end{equation}
We may also define $H^{2s,s} (\Omega_T)$ as the completion of the linear 
space  $C^{2s,s} (\overline{\Omega_T} )$ with respect to the norm   
$\|\cdot \|_{H^{2s,s} (\Omega_T)}$ induced by the inner product
\eqref{eq.Hs}. In particular, if $s=0$ then  $H^{0,0} (\Omega_T) = 
L^{2} (\Omega_T)$. 

We will also use the so-called Bochner spaces 
of functions depending on 
$(x,t)$ over
$\Omega_T$.
Namely, if $\mathcal B$ is a Banach space (possibly, a space 
of functions over $\Omega$) and  
   $p \geq 1$, we denote by  
$L^p ([0,T],{\mathcal B})$ the Banach space of 
measurable maps 
  $u : [0,T] \to {\mathcal B}$
with the norm
$$
   \| u \|_{L^p ([0,T],{\mathcal B})}
 := \| \|  u (\cdot,t) \|_{\mathcal B} \|_{L^p ([0,T])},
$$
see, for instance, \cite[Ch. \S 1.2]{Lion69}. 

Now, taking in account cable equation \eqref{eq.balance.cable.0}, 
we consider a modified Problem \ref{pr.inverse.inside.gen} adding the 
time variable $t\in [0,T]$.

\begin{problem} \label{pr.inverse.inside.t}
Let $\alpha_i, \alpha_e, \beta_e, \beta_i , \mu_i, \mu_e \in \mathbb R$, 
$\alpha_i^2 + \alpha_e^2\ne 0$, $\beta_e^2+\beta_i^2 \ne 0$, 
$\mu_i^2 + \mu_e^2\ne 0$. 
Given vector functions 
$$f \in [L^2 ( \Omega_m \times (0,T))]^k, \, 
f_0 \in [L^2 ([0,T],H^{3/2} (\partial \Omega_m)]^k, \, 
f_1 \in [L^2 ([0,T],H^{1/2} (\partial \Omega_m)]^k,
$$ 
and  mapping $I:  [H^{2,1} (\Omega_m \times (0,T)]^k  
\to  [L^2 (\Omega_m \times (0,T)]^k$,  
find unknown vector functions $ u_b \in[H^{2,1} (\Omega_b \times (0,T)]^k$, 
$ u_i,u_e \in [H^{2,1} (\Omega_m \times (0,T)]^k$ satisfying  
\begin{align}
\alpha_i \Delta^{(i)} u_i +\alpha_e  \Delta^{(e)}  u_e  = 0 \mbox{ in } 
\Omega_m \times [0,T], 
\label{eq7.gen.t} \\
\Delta^{(b)}  u_b  = f\mbox{ in } \Omega_b \times [0,T],\label{eq8.gen.t} \\
	u_e  = u_b   \mbox{ on } \partial \Omega_m \times [0,T], \label{eq9.gen.t} \\
	B^{(e)}_1 	u_e  	=  \beta_e 
	B^{(b)}_1 u_b \mbox{ on }  \partial \Omega_m 
	\times [0,T], \label{eq10.gen.t} \\ 
	B^{(i)}_1 u_i
	=\beta_i B^{(b)}_1 u_b \mbox{ on } \partial \Omega_m \times [0,T],  \label{eq11.gen.t} \\
	B^{(b)}_1 u_b  = f_1  
	\mbox{ on } \partial \Omega \times [0,T],  \label{eq12.gen.t} \\
	u_b = f_0 \mbox{ on }  \partial \Omega \times [0,T],
	\label{eq6.gen.t} \\
			-\mu_i\Delta^{(i)} u _i +\mu_e \Delta^{(e)} u _e = 
 \frac{\partial (u_i-u_e)}{ \partial t} + I (u_i- u_e) 
\mbox{ in }   \Omega \times (0,T).
\label{eq.balance.cable.0.gen}
\end{align}
\end{problem}

As in \S \ref{s.bidomain.steady}, \S \ref{s.bidomain.gen}, it is reasonable to supplement 
the problem with  calibration assumption: there is a function $c_0 (t) \in C[0,T]$ such that 
\begin{equation} \label{eq.calibration.gen.t}
\int_{\partial \Omega_m} h^* (y) (u_i (y,t) + c_0 (t)u_e (y,t)) d\sigma (y) =0 
\end{equation}
for all $h \in S_{A^{(i)} } (\Omega_m) \cap [H^s (\Omega_m)]^k$ and for almost all
 $t\in [0,T]$.

The further developments  depend essentially on the structure 
of the mapping $I$. We continue the discussion with 
the situation considered in Example \ref{ex.prop.gen}.

\begin{theorem} \label{t.bidomain.null.t.prop}
Let $s\geq 1$, $\alpha_i \ne 0$, $\mu_i\alpha_e + \mu_e \alpha_i \ne 0$, 
\eqref{eq.Bi}, \eqref{eq.Be}, \eqref{eq.Bb} hold true and
 the  pair $(\Delta_i, B^{(i)}_1)$ satisfy 
Shapiro-Lopatinsky conditions in $\overline \Omega_m$. Let also the coefficients 
of the operator $\Delta_i$ constants. If 
\eqref{eq.prop.gen} and \eqref{eq.calibration.gen.t} are fulfilled   and 
\begin{equation} \label{eq.Iion.lin}
I (v) = \sum_{j=1}^n a_j  \partial_j v + a_0  v + g  
\end{equation} 
with some function  vector $g \in [L^2 (\Omega_m \times (0,T))]^k$, and some 
 $(k\times k)$-matrices $a_j$, $0\leq j \leq n$,  
then Problem \ref{pr.inverse.inside.t} has no more than one solution 
$(u_i, u_e, u_b)$ in the space 
$[H^{2,1} (\Omega_m \times (0,T))]^k  \times 
[H^{2,1} (\Omega_m \times (0,T))]^k \times 
[H^{2,1} (\Omega_b \times (0,T))]^k$.   
\end{theorem}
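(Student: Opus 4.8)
The plan is to argue by linearity. Given two solutions of Problem~\ref{pr.inverse.inside.t} with the same data, their difference $(u_i,u_e,u_b)$ solves the same problem with $f\equiv0$, $f_0\equiv0$, $f_1\equiv0$ and with the purely linear homogeneous current $I(w)=\sum_{j=1}^n a_j\partial_j w+a_0 w$ (the term $g$ cancels), so it suffices to show that such a difference vanishes. First I would freeze the time variable: for almost every $t\in[0,T]$ the slices $u_i(\cdot,t),u_e(\cdot,t)\in[H^2(\Omega_m)]^k$ and $u_b(\cdot,t)\in[H^2(\Omega_b)]^k$ satisfy the homogeneous steady relations \eqref{eq7.gen.t}--\eqref{eq6.gen.t}. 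Theorem~\ref{t.Cauchy.bidomain.gen} applied to the elliptic Cauchy problem \eqref{eq8.gen.t}, \eqref{eq12.gen.t}, \eqref{eq6.gen.t} forces $u_b(\cdot,t)\equiv0$ in $\Omega_b$. Inserting this into \eqref{eq9.gen.t} and \eqref{eq10.gen.t} gives $u_e=0$ and $B^{(e)}_1u_e=0$ on $\partial\Omega_m$; since $(I_k,B^{(e)}_1)$ is a Dirichlet system, this means $u_e(\cdot,t)\in[H^2_0(\Omega_m)]^k$ for a.e.\ $t$, exactly as in the proof of Theorem~\ref{t.bidomain.null.gen}.

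Next I would exploit the steady structure under proportionality \eqref{eq.prop.gen}. Relations \eqref{eq7.gen.t}, \eqref{eq11.gen.t} place $u_i(\cdot,t)$ into the homogeneous Neumann problem for $\Delta^{(i)}$, so by Theorem~\ref{t.Neumann.M.gen} and the computation of Example~\ref{ex.prop.gen} we get $u_i=(-\alpha_e/\alpha_i)\,{\mathcal N}^{(i)}(\Delta^{(e)}u_e,0)+h_0=(-\alpha_e\gamma/\alpha_i)u_e+h_0$ with $h_0\in S_{A^{(i)}}(\Omega_m)$. Calibration \eqref{eq.calibration.gen.t}, together with $u_e=0$ on $\partial\Omega_m$, yields $\int_{\partial\Omega_m}h^*h_0\,d\sigma=0$ for all $h\in S_{A^{(i)}}(\Omega_m)$, whence $h_0=0$ on $\partial\Omega_m$ and then $h_0\equiv0$ by the Cauchy uniqueness theorem \cite[Theorem~2.8]{ShTaLMS} for $A^{(i)}$. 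Thus $u_i=(-\alpha_e\gamma/\alpha_i)u_e$ for a.e.\ $t$, so the difference $v:=u_i-u_e=-\tfrac{\alpha_e\gamma+\alpha_i}{\alpha_i}u_e$ is a scalar multiple of $u_e$ and, in particular, $v(\cdot,t)\in[H^2_0(\Omega_m)]^k$.

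Now I would substitute these identities into the cable equation \eqref{eq.balance.cable.0.gen}. Using $\Delta^{(e)}=\gamma\Delta^{(i)}$, its left-hand side becomes $\tfrac{\gamma(\mu_i\alpha_e+\mu_e\alpha_i)}{\alpha_i}\Delta^{(i)}u_e$, while its right-hand side is $\partial_t v+\sum_{j=1}^n a_j\partial_j v+a_0 v$ by \eqref{eq.Iion.lin}. When $\alpha_e\gamma+\alpha_i\neq0$ I can express $u_e$ through $v$ and obtain a single constant-coefficient equation
\begin{equation*}
\partial_t v+\rho\,\Delta^{(i)}v+\sum_{j=1}^n a_j\partial_j v+a_0 v=0 \ \text{ in }\ \Omega_m\times(0,T),\qquad \rho=\frac{\gamma(\mu_i\alpha_e+\mu_e\alpha_i)}{\alpha_e\gamma+\alpha_i},
\end{equation*}
where $\rho\neq0$ because $\gamma>0$ and $\mu_i\alpha_e+\mu_e\alpha_i\neq0$. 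Since $\Delta^{(i)}$ is strongly elliptic with constant coefficients, this is a parabolic equation (forward or backward according to the sign of $\rho$) with constant coefficients. In the degenerate case $\alpha_e\gamma+\alpha_i=0$ one has $v\equiv0$, hence $u_i=u_e$, and the cable equation collapses to $\Delta^{(i)}u_e=0$; integrating by parts via \eqref{eq.Bi} against $u_e\in H^2_0$ gives $\|A^{(i)}u_e\|^2_{L^2(\Omega_m)}=0$, so $u_e\in S_{A^{(i)}}(\Omega_m)$ has zero Cauchy data and therefore $u_e\equiv0$.

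The decisive point is the lateral overdetermination: for each $t$ the slice $v(\cdot,t)$ lies in $[H^2_0(\Omega_m)]^k$, so both $v$ and its first-order derivatives vanish on $\partial\Omega_m$. I would extend $v(\cdot,t)$ by zero to a slightly larger domain $\widetilde\Omega\supset\overline\Omega_m$; the vanishing Cauchy data make the extension $\widetilde v$ belong to $H^2$ in $x$ and to $H^{2,1}$ in $(x,t)$, and, since all jump terms across $\partial\Omega_m$ cancel, $\widetilde v$ solves the same constant-coefficient parabolic equation on $\widetilde\Omega\times(0,T)$, vanishing on the nonempty open set $(\widetilde\Omega\setminus\overline\Omega_m)\times(0,T)$. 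Solutions of a constant-coefficient parabolic system are real analytic in the spatial variables on the interior of the time interval (reverse time if $\rho<0$), so $\widetilde v(\cdot,t)$ is spatially analytic on the connected set $\widetilde\Omega$ and vanishes on an open subset; hence $\widetilde v(\cdot,t)\equiv0$ and $v(\cdot,t)\equiv0$ in $\Omega_m$ for a.e.\ $t$. Unwinding the relations gives $u_e\equiv0$, $u_i\equiv0$, $u_b\equiv0$. I expect this final unique-continuation step to be the main obstacle, since a naive energy estimate only yields $\|v(\cdot,t)\|^2\le\|v(\cdot,0)\|^2 e^{Ct}$ and is useless in the absence of any initial condition; the whole argument must therefore rest on the lateral Cauchy overdetermination and the spatial analyticity (equivalently, parabolic unique continuation) that it unlocks.
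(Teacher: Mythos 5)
Your proposal is correct and follows the same skeleton as the paper's proof: reduce by linearity, apply the steady null--space description (Theorem \ref{t.bidomain.null.gen}) slice by slice in $t$ to get $w_b=0$, $w_e\in[H^2_0(\Omega_m)]^k$, $w_i=-(\alpha_e\gamma/\alpha_i)w_e$ (with $h_0=0$ by calibration), derive the same reduced constant--coefficient parabolic equation, and treat the degenerate case $\alpha_i+\alpha_e\gamma=0$ by elliptic Cauchy uniqueness. The only genuine divergence is the implementation of the final unique--continuation step. The paper writes the Green formula for $\mathcal L$ with its fundamental solution $\Psi_{\mathcal L}$ (Lemma \ref{l.Green.heat}); because $w(\cdot,t)\in[H^2_0(\Omega_m)]^k$ and $\mathcal Lw=0$, every surface and volume potential drops out and $w=I_{\Omega_m}(w)$, a function spatially analytic for $t>0$ that the same formula forces to vanish outside $\overline{\Omega_m}\times[0,T]$, hence everywhere. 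You instead extend $v$ by zero across $\partial\Omega_m$ (legitimate precisely because of the vanishing lateral Cauchy data) and invoke interior spatial analyticity of solutions to the extended equation. Both routes rest on the identical mechanism --- lateral overdetermination plus spatial analyticity attached to constant--coefficient parabolic operators --- but your version requires the additional (classical, yet nontrivial) fact that $H^{2,1}$ distributional solutions are analytic in $x$ in the interior, which the paper's potential--theoretic route packages entirely into the known analyticity of $\Psi_{\mathcal L}$. A small bonus of your write--up is the explicit verification that the coefficient $\rho$ is nonzero and that in the degenerate case the factor $\mu_i-\mu_e\gamma$ does not vanish, points the paper passes over quickly.
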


\begin{proof} 
Fix vector functions 
$
f \in [L^2 ( \Omega_m \times (0,T))]^k, \, 
f_0 \in L^2 ([0,T],H^{3/2} (\partial \Omega)), \,  
f_1 \in L^2 ([0,T],H^{1/2} (\partial \Omega))
$, 
admitting a solution 
$u_b \in H^{2,1} (\Omega_b \times (0,T))$ to \eqref{eq8.gen.t},  
\eqref{eq12.gen.t}, \eqref{eq6.gen.t}. 
Let $(\hat u_i, \hat u_e, \hat u_b)$ and $(\tilde u_i, \tilde u_e, \tilde u_b)$  
be two solutions to Problem \ref{pr.inverse.inside.t}. Then
the vector 
$(w_i, w_e,w_b) = (\hat u_i, \hat u_e, \hat u_b) -  
(\tilde u_i, \tilde u_e, \tilde u_b) $ 
satisfies
\begin{align}
\alpha_i \Delta^{(i)} w_i +\alpha_e \Delta^{(e)}  w_e  = 0 \mbox{ in } \Omega_m \times [0,T], 
\label{eq7.gen.t.null} \\
\Delta^{(b)}  w_b  = 0\mbox{ in } \Omega_b \times [0,T],\label{eq8.gen.t.null} \\
	w_e  = w_b   \mbox{ on } \partial \Omega_m \times [0,T], \label{eq9.gen.t.null} \\
	 B^{(e)}_1 	w_e  	= 	\beta_e B^{(b)}_1 w_b \mbox{ on }  \partial \Omega_m 
	\times [0,T], \label{eq10.gen.t.null} \\
			B^{(i)}_1w_i	=	\beta_i B^{(b)}_1 w_b \mbox{ on } \partial \Omega_m \times [0,T],
				\label{eq11.gen.t.null} \\
	 B^{(b)}_1  w_b  = 0  	\mbox{ on } \partial \Omega \times [0,T],  \label{eq12.gen.t.null} \\
	w_b = 0 \mbox{ on }  \partial \Omega \times [0,T],
	\label{eq6.gen.t-null} \\
-\mu_i\Delta^{(i)} w _i +\mu_e \Delta^{(e)} w _e = 
 \frac{\partial (w_i-w_e)}{ \partial t} + I (w_i- w_e) 
\mbox{ in }  \Omega \times (0,T).
\label{eq.balance.cable.0.gen.null}
\end{align}
the last equation being satisfied in $\Omega \times (0,T)$.
Then by Theorem \ref{t.bidomain.null.gen} we have
\begin{equation} \label{eq.nullspace.inside.t}
	\left\{ 
	\begin{array}{ccccc}
	w_b (x,t) & = & 0 & {\rm if} & (x,t)\in \Omega_b \times [0,T],\\ 
	w_e (x,t) & = & w & {\rm if} & (x,t)\in \Omega_m \times [0,T],\\
	 w_i  (x,t)& = & {\mathcal N}^{(i)} ( (-\alpha_e/\alpha_i) \Delta^{(e)} w (\cdot, t),0)(x) &
	{\rm if} & 	(x,t)\in \Omega_m \times [0,T],\\
\end{array}
\right.
\end{equation}
where, as before, ${\mathcal N}^{(i)} $ is the Neumann operator related to $\Delta^{(i)}$ 
and $w $ is a function from the space $L^2 ([0,T], [H^2_0 (\Omega_m)]^k)\cap 
[H^{2s,s} (\Omega_m \times (0,T))]^k$ providing that parabolic equation
\eqref{eq.balance.cable.0.gen.null} is fulfilled and calibration assumption 
\eqref{eq.calibration.gen.t} holds true. 

Since $\Delta^{(i)} = \gamma \Delta^{(i)} $ with some $\gamma>0$, 
then, according to \eqref{eq.nullspace.inside.prop.gen} and \eqref{eq.nullspace.inside.t}, 
we have
\begin{equation} \label{eq.nullspace.inside.t.prop}
	\left\{ 
	\begin{array}{ccccc}
	w_b (x,t) & = & 0 & {\rm if} & (x,t)\in \Omega_b \times [0,T],\\ 
	w_e (x,t) & = & w & {\rm if} & (x,t)\in \Omega_m \times [0,T],\\
	 w_i  (x,t)& = & (-\alpha_e/\alpha_i) \gamma w & {\rm if} & 
	(x,t)\in \Omega_m \times [0,T],\\
\end{array}
\right.
\end{equation}
where $w $ is a function from 
$L^2 ([0,T], [H^2_0 (\Omega_m)]^k) \cap [H^{2s,s} (\Omega_m \times (0,T))]^k$ 
satisfying the following reduced version of 
equation 
\eqref{eq.balance.cable.0.gen.null}: 
\begin{equation}
\label{eq.balance.cable.0.null.r}
 (1+(\alpha_e/\alpha_i) \gamma) \frac{\partial w}{ \partial t} 
 +   (\mu_i(\alpha_e/\alpha_i) +\mu_e) \Delta^{(e)} w = 
\Big(I (\hat u_i-\hat u_e) - 
I (\tilde u_i -\tilde u_e ) \Big) 
\mbox{ in } \Omega_m \times (0,T).
\end{equation}
Clearly, 
\begin{equation} \label{eq.v.w}
\hat v - \tilde v= 
(\hat u_i -\hat u_e)-(\tilde u_i -\tilde u_e ) = w_i-w_e = 
- ((\alpha_e/\alpha_i) \gamma+1)w, 
\end{equation}
and then \eqref{eq.Iion.lin}, \eqref{eq7.gen.t.null}, \eqref{eq.balance.cable.0.null.r} imply
the following relation in $\Omega_m \times (0,T)$:
\begin{equation}
\label{eq.balance.cable.0.null.rr}
(\alpha_i+\alpha_e\gamma) \frac{\partial w}{ \partial t} 
 +   (\mu_e \alpha_i+\mu_i\alpha_e )  \Delta^{(e)} w + 
(\alpha_i+\alpha_e\gamma) \Big( \sum_{j=1}^n a_j \partial_j w + a_0 w\Big) 
=0.
\end{equation}

If $(\alpha_i+\alpha_e\gamma)=0$ then $w(\cdot,t)=0$ for each $t \in [0,T]$
because the Uniqueness Theorem for the Cauchy problem for the second 
order elliptic operator $\Delta^{(e)}$, see \cite[Theorem 2.8]{ShTaLMS}), 
for $w\in [H^2_0 (\Omega_m)]^k $ and $(\mu_i\alpha_e - \mu_e \alpha_i)\ne 0$.  

If $(\alpha_i+\alpha_e\gamma)\ne 0$ then, 
similarly to elliptic theory, we may use integral representations 
in parabolic (backward parabolic) theory. Namely, consider 
the following differential operator 
\begin{equation} \label{eq.L}
{\mathcal L}=
 \frac{\partial }{ \partial t}   +   (\mu_e  \alpha_i+\mu_i\alpha_e )
(\alpha_i+\alpha_e\gamma)^{-1}    \Delta^{(e)}  + 
\sum_{j=1}^n a_j \partial_j  + a_0 
\end{equation}
constant coefficients. 
It is parabolic if $ (\mu_e  \alpha_i+\mu_i\alpha_e )
(\alpha_i+\alpha_e\gamma)^{-1}  >0$ and it is backward 
parabolic if $(\mu_e  \alpha_i+\mu_i\alpha_e )
(\alpha_i+\alpha_e\gamma)^{-1}  <0$. 

We proceed with the parabolic 
case because the arguments for the backward 
parabolic are similar. Indeed, under the asuumtions above ${\mathcal L}$ 
 admits a fundamental solution, say, $\Psi_{\mathcal L}$, 
see \cite[\S 1.5, Theorem 2.8]{eid}, \cite{sol}, and 
hence it admits a suitable integral formula. Namely, 
denote  by $S$ a relatively open subset of $\partial\Omega$ 
and set $S_T = S \times (0,T)$ . 
For functions 
$g \in [L^{2} (\Omega_{T})]^k$, $v \in L^2 ([0,T], [H^{1/2} (\partial\Omega)]^k)$, $w \in 
L^2 ([0,T], [H^{3/2} (\partial\Omega)]^k))$, 
$h \in [H^{1/2}(\Omega)]^k$ we introduce the following potentials:  
\begin{equation*} 
I_{\Omega} (h) (x,t)= \int\limits_{\Omega}\Psi_{\mathcal L}(x,y, t)h(y,0) dy,  
$$
$$
G_{\Omega} (f) (x,t)=\int\limits_{0}^t\int\limits_\Omega \Psi_{\mathcal L}(x,y,\ t,\tau)g(y, \tau)
dy d\tau, 
\end{equation*}
\begin{equation*} 
V_{S} (v) (x,t)=\int\limits_{0}^t\int\limits_{S} \tilde B_0 (y)
\Psi_{\mathcal L}(x,y, t,\tau) v(y, \tau)
ds(y)d\tau, 
\end{equation*}
\begin{equation*} 
W_{S} (w) (x,t)= - \int\limits_{0}^t\int\limits_{S} 
\tilde B_1 (y) 
\Psi_{\mathcal L}(x,y, 
t,\tau)w(y, \tau)ds(y) d\tau, 
\end{equation*}
(see, for instance, \cite[Ch. 1, \S 3 and Ch. 5, \S 2]{frid}), where 
$\tilde B = (\tilde B_0, \tilde B_1)$ is the dual Dirichlet pair 
for the elliptic operator 
$$
D= (\mu_e  \alpha_i+\mu_i\alpha_e ) (\alpha_i+\alpha_e\gamma)^{-1}  \Delta^{(e)}  + 
\sum_{j=1}^n a_j \partial_j  + a_0 
$$ 
and the Dirichlet pair $B=(I_k, B^{(e)}_1)$ over $\partial \Omega$, 
i.e.
\begin{equation*} 
\int_{\partial \Omega} \Big( (\tilde B_1v)^*  u + 
(\tilde B_0v)^* B^{(e)}_1 u 
\Big) d\sigma = 
\int_{D} \Big( v^*  D u - (D^*v)^* u \Big) dx.
\end{equation*}
for all $u,v \in [C^\infty (\overline \Omega)]^k$, see, for instance, 
\cite[Lemma 8.3.3]{Tark97}, \cite[Lemma 9.27]{Tark36}.

By the construction, all these potentials are (improper) integral depending on the 
parameters $(x,t)$.

Next, we formulate the so-called Green formula for the  parabolic operator ${\mathcal L}$.

\begin{lemma} \label{l.Green.heat} 
Assume that the coefficients of the operator $\Delta_i$ and $a_j$, $0\leq j\leq n$, are 
constant. Then for all $ T>0$ and all  $u \in [H^{2,1} (\Omega_{T})]^k$
the following formula holds:
\begin{equation} \label{eq.Green.heat}
\left.
\begin{aligned}
u(x, t),\ (x,t)\in \Omega_{T}  \\
0,\ (x, t)\not\in \overline{\Omega_{T}} 
\end{aligned}
\right\} \! = \Big( 
I_{\Omega} (u)   + G_{\Omega} ({\mathcal L}u)  + 
V _{\partial \Omega} \left( 
\partial _{\nu,M} 
u \right)  +  W_{\partial \Omega} (u) \Big) (x,t)  .
\end{equation}
\end{lemma}

\begin{proof}  See, for instance, \cite[Ch. 6, \S 12]{svesh} or  
\cite[Theorem 2.4.8]{Tark95a} even for more general linear operators admitting 
fundamental solutions or parametrices.
\end{proof}

Taking into account Green formula \eqref{eq.Green.heat}, we obtain
\begin{equation} \label{eq.Green.heat.w}
\left.
\begin{aligned}
w(x, t),\ (x,t)\in \Omega_m \times (0,T)  \\
0,\ (x, t)\not\in \overline{\Omega_m} \times [0,T] 
\end{aligned}
\right\} \! =  
I_{\Omega_m} (w)(x,t)     .
\end{equation}
It is well known that 
the fundamental solution  $\Psi_{\mathcal L} (x,t)$ is real analytic 
with respect to the space variable  $x$ for each $t>0$, see 
\cite{eid}, \cite{sol}. 
In particular, this means that the potential $I_{\Omega} (u)(x,t)$ is real 
analytic with respect to $x$ for each $t>0$, too. However, according to 
\eqref{eq.Green.heat.w}, it equals to zero outside $\overline \Omega_T$.
Therefore it is identically zero for each $t>0$ and then $w\equiv 0$ in $\Omega _T$, 
cf. \cite{KuSh}, \cite{PuSh}  for the similar uniqueness theorem related to 
the heat equation and the parabolic Lam\'e type systems. 

Finally, we see that $(w_i, w_e,w_b)=0$ 
because of \eqref{eq.nullspace.inside.t.prop}.
\end{proof}

Again, we note that Problem \ref{pr.inverse.inside} (i.e. the inverse problem 
of the electrocardiography) 
supplemented with cable equation \eqref{eq.balance.cable.1} 
is perfectly corresponds to Problem \ref{pr.inverse.inside.t} 
with $\alpha_i=1$, $\alpha_e=1$, $\beta_e=-1$, $\beta_i=0$, $\mu_i =1$, $\mu_e =1$, 
$f=0$, $f_1=0$ and the specific choice of the Laplacians $\Delta^{(i)}$, $\Delta^{(e)}$, 
$\Delta^{(b)}$ as in  Example  \ref{ex.cardio}.

On the other hand, we see that Problem \ref{pr.inverse.inside.t}  can be easily adopted 
to many models involving diffusion equations. 
Again, in the practical models of such kind the term 
$I (v,x,t)$ is usually non-linear with respect to $v$. Thus, the uniqueness and 
the existence theorems to Problem \ref{pr.inverse.inside.t}  
under assumptions \eqref{eq.prop.gen} and \eqref{eq.calibration.gen.t} are closely 
related to these type of theorems for the following non-standard Cauchy problem 
for a quasilinear parabolic equation:
\begin{equation}
\label{eq.balance.cable.0.prop}
\left\{
\begin{array}{lll} 
{\mathcal L} v = F(v) &
\mbox{ in } & \Omega_m \times (0,T) ,\\
	v  = g_0  & \mbox{ on } & \partial \Omega_m \times [0,T], 
		\\
	B^{(e)}_1v= g_1
	& \mbox{ on }  & \partial \Omega_m  \times [0,T],
\end{array}
\right.
\end{equation}
with some data $g_0, g_1$ and (possibly, non-linear) term $F$. 
Even in the case where $F$ is linear with respect to $u_e$ 
problem \eqref{eq.balance.cable.0.prop} might be ill-posed in some cases,
cf., \cite{KuSh}, \cite{PuSh}. Thus, for both linear and the non-linear case, 
a thorough investigation of \eqref{eq.balance.cable.0.prop} is necessary. 
It looks that the problem can be treated with the use of Cauchy-Kowalevskaya theorem.
But in a matter of facts, the question is much more delicate because 
the Cauchy-Kowalevskaya theorem is related to real analytic solutions 
for real analytic data in a small neighborhood of a real analytic surface.
Even if we assume that the surface $\partial \Omega_m$ and the data  are
real analytic, the structure of the fundamental solutions to parabolic 
equations with constant coefficients makes us admit that 
solutions of such equations are often real analytic 
with respect to the space variables $(x_1, \dots, x_n)$ but unlikely to 
be analytic with respect the time variable $t$.

However, as the primary goals of the paper were uniqueness theorems, it is worth 
to say that the real analyticity of solutions to \eqref{eq.balance.cable.0.prop} 
with respect to the space variables would leave us a good hope 
for a uniqueness theorem for Problem \ref{pr.inverse.inside.t} 
in a non-linear situation, too.

\smallskip

\textit{Acknowledgments\,} 
The authors were supported by the grant of the Foundation for the Advancement of Theoretical 
Physics and Mathematics "BASIS"{}.


\begin{thebibliography}{XXXXXX}

\bibitem{Agr} 
Agranovich, M.S.,  Vishik, M.I.,  
\textit{Elliptic problems with a parameter and parabolic problems of general type}, 
Rus. Math. Surv., \textbf{19}(1964), 3, 53–157.


\bibitem{AP96}
{Aliev, R.R., Panfilov, A.V.}, 
\textit{A simple two-variable model of Cardiac Excitation}, 
 Chaos, Solitons and Fractals, V. 7:3 (1996), 293--301.

\bibitem{Bor10} 
{Borsuk, M.} 
\textit{Transmission problem for Elliptic second-order Equations in non-smooth domains}, 
Birkh\"auser, Berlin, 2010.

\bibitem{1}  
{Burger, M., Mardal, K.A., Nielsen, B.F.},   
\textit{Stability analysis of the inverse transmembrane potential problem in electrocardiography}, Inverse Problems, 26 (2010), 10, 105012 

\bibitem{eid} {Eidel'man, S.D.}, \textit{Parabolic equations}, 
Partial differential equations – 6, Itogi Nauki i Tekhniki. Ser. Sovrem. Probl. Mat. Fund. 
Napr., 63, VINITI, Moscow, 1990, 201--313.

\bibitem{FeSh14} 
{Fedchenko D.P., Shlapunov, A.A.}, 
\textit{On the Cauchy problem for the elliptic complexes in spaces of distributions}.
Complex Variables and Elliptic Equations, 
V. 59, N. 5, 2014, 651-679. 


\bibitem{frid}
{Friedman, A.}, \textit{Partial differential equations of parabolic type}, 
Englewood Cliffs, NJ, Prentice-Hall, Inc., 1964.

\bibitem{geselowitz1983}
{Geselowitz, D.B., Miller, V.}, \textit{A bidomain model for isotropic cardiac muscle},
Ann. Biomed. Eng., 1983; 11 (3-4), 191-206.

\bibitem{GiTru83} 
Gilbarg, D., Trudinger, N., \textit{Elliptic Partial Differential 
Equations of second order}, Berlin, Springer-Verlag, 1983.

\bibitem{HedbWolf1}
{Hedberg, L.I., Wolff, T.H.},
  Thin sets in nonlinear potential theory,
  Ann. Inst. Fourier (Grenoble) {\bf 33} (1983), no.~4, 161--187.

\bibitem{2K} 
{Kalinin, V., Kalinin A., Schulze W.H.W., Potyagaylo D., Shlapunov, A.,}
\textit{On the correctness of the transmembrane potential based inverse problem 
of ECG}, \textit{Computing in Cardiology}, 2017, 1--4.

\bibitem{KMF91} 
{Kozlov, V.A., Maz'ya V.G., and  Fomin, A.V.},
\textit{An iterative method for solving the Cauchy problem for elliptic equations}, 
USSR Computational Mathematics and Mathematical Physics, 1991, 31:1, 
45--52.

\bibitem{Kup}
{Kupradze,  V.D., Gegelia, T. G., Basheleishvili, M. O.,  
Burchuladze, T. V.} 
\textit{Three-dimensional problems of the mathematical theory of elasticity and 
thermoelasticity},  Applied Mathematics and Mechanics, vol. 25, North-Holland, 1979. 

\bibitem{KuSh}
{Kurilenko, I.A., Shlapunov, A.A.}, 
{On Carleman-type Formulas for Solutions to the Heat Equation}, 
Journal of Siberian Federal University, Math. and Phys.,  12:4 (2019), 421--433.

\bibitem{LadSoUr67}
{Ladyzhenskaya, O.A., Solonnikov V.A., Ural'tseva, N.N.}
\textit{Linear and Quasilinear Equations of Parabolic Type},  
Moscow, Nauka,  1967.

\bibitem{Lv1} {Lavrent'ev, M.M.}, 
\textit{On  the  Cauchy  problem  for  linear 
elliptic equations of the second  order}, Dokl. AN SSSR, 112:2 (1957), 195--197.

\bibitem{Lion69}
{Lions, J.-L.}, \textit{Quelques m\'{e}thodes de r\'{e}solution des probl\`{e}mes aux 
limites non lin\'{e}are}, Dunod/Gauthier-Villars, Paris, 1969, 588~pp.

\bibitem{LiMa72}
{Lions, J. L.,  Magenes, E.},
  \textit{Non-Homogeneous Boundary Value Problems und Applications. Vol. 1},
  Springer-Verlag, Berlin et al., 1972.

\bibitem{McL00}
{McLean, W.},
  \textit{Strongly Elliptic Systems and Boundary Integral Equations},
  Cambridge Univ. Press, Cambridge, 2000.

\bibitem{Mikh} 
{Mikhailov, V.P.}, \textit{Partial differential equations}, 
Nauka, Moscow, 1976.  

\bibitem{PuSh} 
{Puzyrev, R.E., Shlapunov, A.A.} 
\textit{ On a mixed problem for the parabolic Lam\'e type operator}. 
J. Inv. Ill-posed Problems, V. 23:6 (2015), 555--570. 

\bibitem{Roit96}
{Roitberg, Ya.},  \textit{Elliptic Boundary Value Problems in Spaces of Distributions}, 
Kluwer Academic Publishers, Dordrecht, NL, 1996.

\bibitem{Shap} 
{Shapiro, Z.Ya.}, \textit{On general boundary problems for equations of elliptic type}, 
Izv. Akad. Nauk SSSR Ser. Mat., \textbf{17}(1953), 6, 539–562.

\bibitem{Shef}
{Shefer, Yu.L.}, On a Transmission Problem Related to Models
of Electrocardiology, \textit{Journal of Siberian federal university. Math. and Physics}, 
\textbf{13}(2020), 5,  596--607.

\bibitem{Sh92a} 
{Shlapunov, A.A.}, 
{ On the Cauchy problem for the Laplace equation.} 
\textit{Siberian Math. Journal}, V.33, N. 3 (1992), p. 205--215.

\bibitem{Sh97}  
{Shlapunov, A.A.},
On the Cauchy Problem for some elliptic systems in a shell in $\mathbb {R}^n$, 
\textit{Zeitschrift f\"ur Angewandte Mathematik und Mechanik}, \textbf{77}(1997), 11, 849--859.

\bibitem{ShTaLMS} 
{Shlapunov, A.A., Tarkhanov, N.N.},  Bases with double orthogonality in the Cauchy problem for
systems with injective symbols, \textit{Proceedings LMS}, \textbf{3} (1995), 1, 1--52.

\bibitem{Simanca1987}  
S.Simanca, Mixed Elliptic Boundary Value Problems, \textit{Comm. in PDE}, \textbf{12}(1987), 
123-200.

\bibitem{Slob58}
{Slobodetskii, L. N.},
 \textit{Generalised spaces of S.L. Sobolev and their applications to boundary problems
          for partial differential equations},
   Science Notes of Leningr. Pedag. Institute \textbf{197} (1958), 54--112. 

\bibitem{Sm65}
{Smale, S.}, \textit{An infnite dimensional version of Sard's theorem},
Amer. J. Math. 87 (1965), no. 4, 861--866.

\bibitem{sol}
{Solonnikov, V.A.}, \textit{On boundary value problems for linear parabolic systems of differential equations of general form. Boundary value problems of mathematical physics. 
Part 3. On boundary 
value problems for linear parabolic systems of differential equations of general form},  
Trudy Mat. Inst. Steklov, {\bf 83}, 1965, 3--163 

\bibitem{2}   
{Sundnes, J., Lines,  G.T.,Cai,  X., Nielsen, B.F., Mardal, K.A., Tveito, A.},
\textit{Computing the Electrical Activity in the Heart}, Springer-Verlag, 2006.

\bibitem{svesh}
{Sveshnikov, A.G., Bogolyubov, A.N.,  Kravtsov, V.V.}, 
\textit{Lectures on mathematical physics}, M., Nauka, 2004. 

\bibitem{Tark95a}
{Tarkhanov, N.},  \textit{Complexes of Differential Operators}, 
Kluwer Academic Publishers, Dordrecht, NL, 1995.

\bibitem{Tark97}
{Tarkhanov, N.},  \textit{The Analysis of solutions of Elliptic Equations}, 
Kluwer Academic Publishers, Dordrecht, NL, 1997.

\bibitem{Tark36} 
{Tarkhanov, N.},  
\textit{The Cauchy Problem for Solutions of Elliptic Equations}, Akademie-Verlag, 
Berlin, 1995.

\bibitem{TikhArsX}
{Tikhonov, A.N., Arsenin, V.Ya.},   
\textit{Methods of Solving Ill-posed Problems},  Nauka, 
Moscow, 1986.



\end{thebibliography}
\end{document}